\newtheorem{theorem}{Theorem}[section]
\newtheorem{lemma}[theorem]{Lemma}
\newtheorem{corollary}[theorem]{Corollary}
\theoremstyle{definition}
\newtheorem{definition}[theorem]{Definition}
\newtheorem{notation}[theorem]{Notation}
\theoremstyle{remark}
\newtheorem{remark}[theorem]{Remark}
\newcommand{\integer}{\mathbb{Z}}
\newcommand{\gf}[1]{\ensuremath{\mathbb{F}_{#1}}}
\newcommand{\Vee}[2]{\ensuremath{\mathbb{V}_{#1}^{(#2)}}}
\DeclareMathOperator{\ord}{ord}
\newcommand{\dihedral}[2]{\ensuremath{\mathbb{D}_{#1,#2}}}
\newcommand{\olddihedral}[2]{\ensuremath{\mathcal{D}_{#1,#2}}}
\newcommand{\Dqr}{\dihedral{q}{r}}
\newcommand{\Dst}{\dihedral{s}{t}}
\newcommand{\W}[4]{\ensuremath{\mathbb{W}^{#1,#2}_{#3,#4}}}
\newcommand{\A}[3]{\ensuremath{\mathbb{A}^{#1}_{#2,#3}}}
\newcommand{\C}[2]{\ensuremath{{\mathcal{C}_{#1,#2}}}}
\newcommand{\Cst}{\C{s}{t}}
\newcommand{\Wqrst}{\W{q}rst}
\newcommand{\Arst}{\A{r}{s}{t}}
\newcommand{\HH}[3]{\ensuremath{\mathcal{H}_{#1}^{#2,#3}}}
\newcommand{\Hpqr}{\HH{p}{q}{r}}
\newcommand{\GK}[2]{\ensuremath{GK_{#1,#2}}}
\newcommand{\SK}[2]{\ensuremath{SK_{#1,#2}}}
\newcommand{\GKab}{\GK{a}{b}}
\newcommand{\SKab}{\SK{a}{b}}
\newcommand{\eps}{\varepsilon}
\newlength{\radius}
\begin{document}

\title[Generalised knot groups of square and granny knot analogues]{Distinguishing the generalised knot groups of square and granny knot analogues}

\author{Howida Al Fran}

\author{Christopher Tuffley}
\address{Institute of Fundamental Sciences, Massey University, Private Bag 11 222, Palmerston
North 4442, New Zealand}
\email{c.tuffley@massey.ac.nz}

\subjclass[2010]{Primary 57M27; Secondary 20F34}
\keywords{Knot invariants, generalised knot groups, square and granny knot analogues, wreath product}

\begin{abstract}
Given a knot $K$ we may construct a group $G_n(K)$ from the
fundamental group of $K$ by adjoining an $n$th root of the meridian
that commutes with the corresponding longitude. For $n\geq2$ these ``generalised
knot groups" determine $K$ up to reflection (Nelson and Neumann, 2008). 

The second author has shown that for $n\geq2$, the generalised knot groups of the square knot $SK$ and the granny knot $GK$ can be distinguished by counting homomorphisms into a suitably chosen finite group. We extend this result to certain generalised knot groups of square and granny knot analogues $\SK{a}b=T_{a,b}\# T_{-a,b}$, $\GK{a}b=T_{a,b}\# T_{a,b}$, constructed as connect sums of $(a,b)$-torus knots of opposite or identical chiralities. More precisely, for coprime $a,b\geq2$ and $n$ satisfying a certain coprimality condition with $a$ and $b$, we construct an explicit finite group $G$ (depending on $a$, $b$ and $n$)
such that $G_n(\SK{a}b)$ and $G_n(\GK{a}b)$ can be distinguished by counting homomorphisms into $G$. 
The coprimality condition includes all $n\geq2$ coprime to $ab$. 
The result shows that the difference between these two groups can be detected using a finite group. 
\end{abstract}

\maketitle

\section{Introduction}

Given a knot $K$ we may construct a group $G_n(K)$ from the
fundamental group of $K$ by adjoining an $n$th root of the meridian
that commutes with the corresponding longitude. Topologically, this corresponds to taking the fundamental group of the space $M_n(K)$ obtained by gluing a torus to the boundary of the exterior of $K$ by a suitably chosen map: expressing the boundary of the exterior as $\mu\times\lambda$, where $\mu$ and $\lambda$ are curves representing the meridian and longitude respectively, we use the map
$\phi:\mu\times\lambda\to S^1\times S^1$ given by
$\phi(z_1,z_2)=(z_1^n,z_2)$. These ``generalised
knot groups" are invariants of $K$, and were introduced independently
by Wada~\cite{wada-1992} and Kelly~\cite{kelly-1990} in the early 1990s. In addition to the topological definition above, due to Wada, they
admit several other definitions, including one via a Wirtinger-type
presentation. In this presentation there is a generator $x_i$ for each arc, as usual, but the usual crossing relations
$x_k=x_j^{\pm1 }x_i x_j^{\mp1}$ are replaced by relations of the form
$x_k=x_j^{\pm n}x_i x_j^{ \mp n}$. In particular, the group $G_1(K)$
is simply the fundamental group of $K $.

In 2008 Nelson and Neumann~\cite{nelson-neumann-2008} showed that for $n\geq 2$ the generalised knot group $G_n(K)$ determines the knot $K$ up to reflection. For $n=2$ the space $M_2(K)$ is a closed nonorientable manifold, and in this case they use the geometric version of the JSJ decomposition applied to this manifold to show that one can recover the knot complement. By Gordon and Luecke~\cite{gordon-luecke-1989} this determines the knot up to reflection.
For $n\geq 3$ the space $M_n(K)$ is not a manifold, and in this case Nelson and Neumann establish the result using the Scott-Swarup version of JSJ decomposition for groups.  

The generalised knot groups were first shown to carry more information about the knot than the fundamental group does by the second author~\cite{tuffley-2007}. This was done by confirming a conjecture due to Lin and
Nelson~\cite{lin-nelson-2008} that the generalised knot groups of the
square knot $SK$ and granny knot $GK$ are nonisomorphic for all
$n\geq2$, by showing that $G_n(SK)$ and $G_n(G K)$ can be
distinguished by counting homomorphisms into a suitably chosen finite group. In view of the Nelson-Neumann result that $G_n$ is a complete knot invariant up to reflection, this result shows that the difference between $G_n(SK)$ and $G_n(GK)$ can be detected using a finite group (albeit a large finite group: for example, for $n$ coprime to $30$ the group used has order $2\cdot3\cdot5^3\cdot q^{12}$, where $q$ is the least prime dividing $n$). The difference between these two groups can therefore be detected using a finite group, and so can be detected algorithmically. 

In this paper we extend Tuffley's construction to detect the difference between generalised knot groups of square and granny knot analogues built from $(a,b)$-torus knots instead of $(2,3)$-torus knots. For coprime integers $a$ and $b$ let $T_{a,b}$ be the $(a,b)$-torus knot, and when $a$ and $b$ are both positive let
\begin{align*}
\SKab & = T_{a,b}\# T_{-a,b}, & \GKab &= T_{a,b}\# T_{a,b}.
\end{align*}
Then the usual square and granny knots are \SK23\ and \GK23\ respectively. We prove the following theorem, which shows that (at least for certain $n$) the difference between $G_n(\SKab)$ and $G_n(\GKab)$ can be detected by counting homomorphisms into a suitably chosen finite group:

\begin{theorem}
\label{thm:main}
Let $a,b,n\geq2$ be positive integers such that $\gcd(a,b)=1$. Suppose that there are prime numbers  $s|a$ and $t|b$ such that $\gcd(st,n)=1$ (in particular, this holds if $\gcd(ab,n)=1$). Then there is a finite group $H$ such that 
\[
|Hom(G_n(GK_{a,b}),H)| < |Hom(G_n(SK_{a,b}),H)|.
\] 
\end{theorem}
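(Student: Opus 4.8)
The plan is to reduce the comparison of homomorphism counts to a question about a quadratic form attached to the peripheral structure of the two torus-knot factors, and then to engineer a finite group $H$ on which this form behaves differently on the ``aligned'' connect sum $\GK{a}{b}$ and the ``anti-aligned'' connect sum $\SK{a}{b}$. First I would record the effect of the connect sum on the generalised knot group. Writing $E_\pm$ for the exterior of $T_{\pm a,b}$, so that $\pi_1(E_\pm)=\langle u,v\mid u^a=v^{\pm b}\rangle$ has central Seifert fibre $z_\pm$, meridian $\mu_\pm$, and longitude $\lambda_\pm=z_\pm\mu_\pm^{\mp ab}$, I would present $G_n(K_1\#K_2)$ by amalgamating the two factor groups along a common meridian $\mu=\mu_1=\mu_2$ and adjoining a root $m$ with $m^n=\mu$ and $[m,\lambda_1\lambda_2]=1$. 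A homomorphism $\phi\colon G_n(K_1\#K_2)\to H$ is then the datum of $h=\phi(m)$ together with homomorphisms $\rho_i$ of the factors satisfying $\rho_1(\mu)=\rho_2(\mu)=h^n$, plus the single commutation relation coming from $\lambda_1\lambda_2$. Since each $z_i$ commutes with $\mu=m^n$, the framing powers $\mu^{\mp ab}=m^{\mp abn}$ cancel against $h$ and that relation collapses to $[h,\rho_1(z_1)\rho_2(z_2)]=1$. Setting $P^{\pm}_h(g)=\#\{\rho\colon\pi_1(E_\pm)\to H : \rho(\mu)=h^n,\ \rho(z)=g\}$ I would obtain
\[
|Hom(G_n(K_1\#K_2),H)| = \sum_{h\in H}\ \sum_{[h,\,g_1g_2]=1} P^{\varepsilon_1}_h(g_1)\,P^{\varepsilon_2}_h(g_2),
\]
where $(\varepsilon_1,\varepsilon_2)=(+,+)$ for $\GK{a}{b}$ and $(+,-)$ for $\SK{a}{b}$.

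The crucial input is the behaviour of the Seifert fibre under reflection: the orientation-reversing homeomorphism $E_+\to E_-$ carries the meridian to the meridian but reverses the fibre, giving $P^-_h(g)=P^+_h(g^{-1})$. Substituting this into the square-knot count turns the constraint $[h,g_1g_2]=1$ into $[h,g_1g_2^{-1}]=1$, i.e. $g_1$ and $g_2$ lie in the same right coset of the centraliser $C=C_H(h)$. Writing $S_h(Cx)=\sum_{g\in Cx}P^+_h(g)$ for the coset sums, the square-knot count becomes a sum of squares $\sum_h\sum_{Cx}S_h(Cx)^2$, whereas the granny-knot count is the off-diagonal pairing $\sum_h\sum_{Cx}S_h(Cx)\,T_h(x^{-1}C)$ of right-coset sums against left-coset sums. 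Cauchy--Schwarz then predicts $|Hom(G_n(\GK{a}{b}),H)|\le|Hom(G_n(\SK{a}{b}),H)|$, with equality exactly when the two coset-sum functions agree; so the whole problem is to manufacture a single $H$ (and a single $h$) forcing a genuine discrepancy in the correct direction.

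To obtain strict inequality I would construct $H$ by generalising Tuffley's finite quotient from the $(2,3)$ case, using the hypothesised primes $s\mid a$ and $t\mid b$. The torus-knot factor surjects onto the free product $\mathbb{Z}/a * \mathbb{Z}/b$, hence onto $\mathbb{Z}/s * \mathbb{Z}/t$ and onto a dihedral-type group $\dihedral{s}{t}$ in which the image of the fibre $z$ is a specific non-central element $g_0$; the coprimality condition $\gcd(st,n)=1$ guarantees that $\mu=m^n$ and the root $h$ can be chosen compatibly, with $h$ an $n$th root realised inside an auxiliary $q$-group ($q$ the least prime dividing $n$) assembled with $\dihedral{s}{t}$ into a wreath product. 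The design goal is that for a suitable $h$ one has $g_0$ commuting with $h^n$ but $g_0^2\notin C_H(h)$: then every granny contribution at this $h$ with $\rho_1(z)=\rho_2(z)=g_0$ is annihilated, since its pair fails $[h,g_1g_2]=1$, while the matching square contributions survive because $[h,g_1g_2^{-1}]=[h,e]=1$. This produces a strict gap at that $h$ while the Cauchy--Schwarz bound keeps $\GK{a}{b}\le\SK{a}{b}$ at every other $h$.

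I expect the main obstacle to be the explicit construction and verification of $H$. One must exhibit genuine homomorphisms out of $\pi_1(E_+)$ realising the required value $\rho(z)=g_0$ with $\rho(\mu)=h^n$, so that the torsion relation $u^a=v^b$, the meridian word, and the chosen $n$th-root element are all made simultaneously consistent inside $H$; one must check that the reflection identity $P^-_h(g)=P^+_h(g^{-1})$ holds for this concrete $H$; and one must confirm that the per-$h$ Cauchy--Schwarz inequality is never violated, so that the isolated strict term genuinely dominates rather than being cancelled by the mirror substitution elsewhere. This last point is exactly where the coprimality hypothesis $\gcd(st,n)=1$, together with the passage from $(2,3)$ to general coprime $(a,b)$ via the prime divisors $s$ and $t$, does the real work.
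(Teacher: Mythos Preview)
Your counting framework is correct and matches the paper's: both reduce to map-root pairs $(\rho,\eta)$ and the commutation condition $[\eta,\rho(z_1)\rho(z_2)^{\pm1}]=1$. But the Cauchy--Schwarz step is a genuine gap. With $C=C_H(\eta)$ you correctly get
\[
\mathrm{SK}_\eta=\sum_{Cx}S_\eta(Cx)^2,\qquad
\mathrm{GK}_\eta=\sum_{Cx}S_\eta(Cx)\,T_\eta(x^{-1}C),
\]
but Cauchy--Schwarz only yields $\mathrm{GK}_\eta^2\le\mathrm{SK}_\eta\cdot\sum_{yC}T_\eta(yC)^2$, and there is no reason for the left-coset sum of squares to equal the right-coset one. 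You acknowledge this in your final paragraph (``one must confirm that the per-$h$ Cauchy--Schwarz inequality is never violated''), but offer no mechanism for confirming it. That verification \emph{is} the content of the paper's statement~(I), so your reformulation has not reduced the work. (There is a way to rescue the global inequality: torus knots are invertible, giving $P_m(g)=P_{m^{-1}}(g^{-1})$, and then AM--GM plus the bijection $\eta\leftrightarrow\eta^{-1}$ yields $\sum_\eta\mathrm{GK}_\eta\le\sum_\eta\mathrm{SK}_\eta$ for \emph{any} $H$ --- but you do not invoke this.)

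The construction of $H$ is also too vague, and in places mis-aimed. You want the image of the fibre $z$ in $\dihedral{s}{t}$ to be a ``specific non-central element $g_0$'' with $g_0^2\notin C_H(h)$; but in the paper's target $\W{q}{r}{s}{t}=\dihedral{q}{r}\wr\dihedral{s}{t}$ the image $\hat\varepsilon$ of $z$ in the base $\dihedral{s}{t}$ is \emph{trivial} (since $\hat\chi$ has order $s\mid a$). The obstruction lives entirely in the $\dihedral{q}{r}$-coordinates: one shows $[(\varepsilon\delta)_g]=2ab[[\alpha]]/s^{\ord_t(s)}\neq0$ in $\integer_r$ (this is where the extra prime $r$, coprime to $2nab$, enters --- it is not a ``$q$-group'' as you suggest), so $(\varepsilon\delta)_g$ has order $r$ and cannot commute with a nontrivial $\eta_g\in\Vee{q}{r}$, whereas $[(\varepsilon\delta^{-1})_g]=0$ always. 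The paper then proves (I) by a direct case analysis (trivial vs.\ nontrivial induced map to $\dihedral{s}{t}$; cyclic vs.\ non-cyclic image of each factor) using structural lemmas about cycle products in the wreath product, and proves (II) by an explicit realisation. None of this machinery appears in your sketch, and your heuristic does not substitute for it.
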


The restriction on $n$ is there to simplify the arguments and make the underlying ideas more transparent. We expect that the result holds for all $n$, and that the techniques of Tuffley~\cite{tuffley-2007} can be adapted to handle the remaining cases where $\gcd(ab,n)>1$.

The building blocks for our target groups will be groups of the form
\[
\dihedral{q}r = (\integer_q)^{\ord_r(q)}\rtimes\integer_r,
\]
where $q$ and $r$ are distinct primes and $\ord_r(q)$ is the multiplicative order of $q$ modulo $r$; that is, the least positive integer $d$ such that $q^d\equiv 1\bmod r$. Using these we will construct our target groups as wreath products of two such groups, giving groups of the form
\[
\Wqrst = \dihedral{q}r\wr\dihedral{s}{t} = (\dihedral{q}r)^{s^{\ord_t(s)}}\rtimes\dihedral{s}t.
\]
Here $q$, $r$, $s$ and $t$ are distinct primes, with $s$ and $t$ the primes dividing $a$ and $b$, respectively, given in the statement of the theorem. We construct these groups in Section~\ref{sec:target}.

\subsection{Organisation}

Section~\ref{sec:definitions} is devoted to definitions.
We define the generalised knot groups in Section~\ref{sec:GnK}, and find presentations for the generalised knot groups of the square and granny knot analogues in Section~\ref{sec:GK-SK}. We then construct our target groups $\Wqrst$ and prove some needed results about them in Section~\ref{sec:target}. 
We study homomorphic images of the meridian and longitude of the $(a,b)$-torus knot group in \Wqrst\ in Section~\ref{sec:images}, in preparation for finally proving our main result in Section~\ref{sec:proof}. 

\section{Definitions}
\label{sec:definitions}

\subsection{Generalised knot groups}
\label{sec:GnK}

The generalised knot groups were defined independently by Kelly~\cite{kelly-1990} and Wada~\cite{wada-1992}, using different approaches.
Kelly reportedly~\cite{lin-nelson-2008} reached them via the knot quandle and Wirtinger presentations, while Wada's approach was to look for group-valued link invariants by studying representations $\rho:B_n\to\mathrm{Aut}(F_n)$. 
His goal was to find what he called \emph{shift representations} of the braid groups that are compatible with the Markov moves, and so could be used to define a group-valued invariant of closed braids and hence of links. A computer search found seven different types of shift representations, of which five were compatible with the Markov moves. The ``generalised knot groups'' are his ``group invariants of type 4''. 

The generalised knot group $G_n(K)$ admits a Wirtinger-like presentation, in which the usual crossing relations $x_k=x_j^{\pm1 }x_i x_j^{\mp1}$ are replaced by relations of the form
$x_k=x_j^{\pm n}x_i x_j^{ \mp n}$ (see Figure~\ref{fig:crossings}). However, the topological description of $G_n(K)$ as the fundamental group of the space $M_n(K)$ constructed above typically leads to a simpler presentation. Suppose that $\pi_1(S^3-K)$ has presentation 
\[
\pi_1(S^3-K) = \langle g_1,\dots,g_k|r_1,\dots,r_\ell\rangle,
\]
and let $\mu,\lambda$ be words in the generators representing the meridian and longitude respectively. Then applying the Seifert-van Kampen Theorem to $M_n(K)$ we obtain the presentation
\[
G_n(K) = \langle g_1,\dots,g_k,\nu|r_1,\dots,r_\ell,\nu^n=\mu,\nu\lambda=\lambda\nu\rangle. 
\]
Note further that, since $\mu$ and $\nu$ commute, we are not required to use zero-framed longitudes, and may freely replace $\lambda$ with $\lambda'=\mu^m\lambda$ for any $m$. We use this freedom in choosing the longitude below, in finding presentations for the generalised knot groups of the square and granny knot analogues.

\begin{figure}
\setlength{\radius}{2cm}
\begin{center}\setlength{\tabcolsep}{1cm}
\begin{tabular}{cc}
\begin{tikzpicture}[null/.style={minimum size = 0mm,inner sep=0mm}]
\foreach \x in {0,1,2,3}
    {\node (\x) at (45+\x*90:\radius) [null] {};}
\draw[->,thick] (2) -- (0);
\draw[line width=10,color=white] (3) -- (1);
\draw (3) edge[->,thick] (1);
\node [below right] at (0) {$x_k$};
\node [left] at (2) {$x_i$};
\node [right] at (3) {$x_j$};
\end{tikzpicture} &
\begin{tikzpicture}[null/.style={minimum size = 0mm,inner sep=0mm}]
\foreach \x in {0,1,2,3}
    {\node (\x) at (45+\x*90:\radius) [null] {};}
\draw[->,thick] (3) -- (1);
\draw[line width=10,color=white] (2) -- (0);
\draw (2) edge[->,thick] (0);
\node [below left] at (1) {$x_k$};
\node [right] at (3) {$x_i$};
\node [left] at (2) {$x_j$};
\end{tikzpicture} \\
$x_k= x_j^{n}x_ix_{j}^{-n}$ &
$x_k= x_j^{-n}x_ix_{j}^{n}$ 
\end{tabular}
\caption{Crossing relations for $G_n(K)$ at left- and right-handed crossings.}
\label{fig:crossings}
\end{center}
\end{figure}
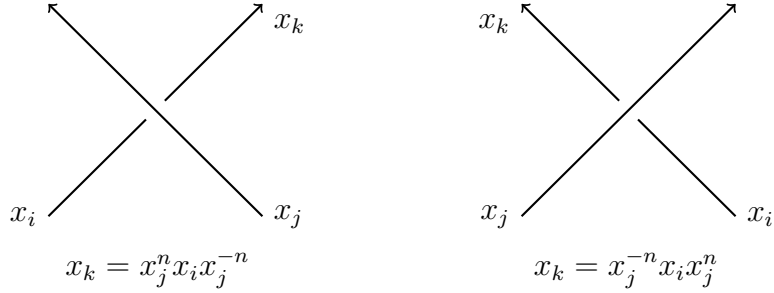

\subsection{The square and granny knot analogues}
\label{sec:GK-SK}

The square knot $SK$ is the connect sum of two trefoil knots with opposite chiralities, and the granny knot $GK$ is the connect sum of two trefoil knots with identical chiralities. Since the trefoil knot is the $(2,3)$-torus knot $T_{2,3}$, we may construct analogues of the square and granny knots by taking connect sums of $(a,b)$-torus knots instead. Accordingly, for coprime positive integers $a$ and $b$ we define the $(a,b)$-square and -granny knot analogues $\SKab$ and $\GKab$ to be the knots
\begin{align*}
\SKab & = T_{a,b}\# T_{-a,b}, & \GKab &= T_{a,b}\# T_{a,b}.
\end{align*}
Thus, the $(a,b)$-square knot analogue is the connect sum of two $(a,b)$-torus knots with opposite chiralities, and the $(a,b)$-granny knot analogue is the connect sum of two $(a,b)$-torus knots with identical chiralities.

To obtain presentations for the generalised knot groups of the square and granny knot analogues we make use of the following theorem describing the group $G_{a,b}$ of the $(a,b)$-torus knot. Of particular importance to us is part~\ref{part:meridian}, which describes the meridian and longitude.

\begin{theorem}
[See Burde, Zieschang and Heusener~{\cite[Prop 3.38 (p.~49)\footnote{Prop 3.28 (p.~45) in the original 1985 edition by Burde and Zieschang. The statement of the result differs slightly between the two editions; in particular, in the original edition the meridian is given as $u^cv^d$, where $ad+bc=1$. We have followed the newer edition.}]{burde-zieschang-heusener-2014}}]
\label{thm:Gab}
Let $(W,W')$ be a standard Heegaard splitting of genus 1 of the oriented 3-sphere $S^3$, and suppose that the torus knot $T_{a,b}$ lies on the torus $W\cap W'$.
\begin{enumerate}
\item
The group $G_{a,b}$ of the torus knot $T_{a,b}$ can be presented as follows:
\[
G_{a,b}= \langle u,v | u^a v^{-b} \rangle=\langle u|-\rangle*_{\langle u^a= v^b \rangle}\langle v |-\rangle,
\]
where $u$ is the generator of  $\pi_1(W')$, and $v$ is the generator of $\pi_1(W)$.
The amalgamating subgroup $\langle u^a\rangle$ is an infinite cyclic group; it represents the centre $Z(G_{a,b})\cong\integer$, and $G_{a,b}/Z(G_{a,b})\cong\integer_{|a|}*\integer_{|b|}$.
\item\label{part:meridian}
The elements $\mu =  v^{d}u^{-c}$, $\lambda=u^a\mu^{-ab}$, where $ad-bc=1$, describe meridian and longitude of $T_{a,b}$ for a suitably chosen basepoint.
\item
$T_{a,b}$ and $T_{a',b'}$ have isomorphic groups if and only if $|a|=|a'|$ and $|b|=|b'|$, or $|a|=|b'|$ and $|b|=|a'|$.
\end{enumerate}
\end{theorem}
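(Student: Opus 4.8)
The plan is to prove Theorem~\ref{thm:Gab} from the standard genus-one Heegaard decomposition $(W,W')$ of $S^3$. Let $X$ be the exterior of $T_{a,b}$ (the complement of an open tubular neighbourhood), chosen so that $N(T_{a,b})$ meets the Heegaard torus $T = W\cap W'$ in an annular neighbourhood of $T_{a,b}$ in $T$, and set $V = W\cap X$, $V' = W'\cap X$. The first step is to check that $V$ and $V'$ are each solid tori: pushing $T_{a,b}$ together with its neighbourhood slightly into the interior of $W$ (respectively $W'$) exhibits $V$ (respectively $V'$) as a solid torus with core the core circle of $W$ (respectively $W'$), and simultaneously exhibits the annulus $A = V\cap V'$ as a regular neighbourhood in $T$ of a curve isotopic to $T_{a,b}$. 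Hence, after suitably naming and orienting the two solid tori, the core of $A$ wraps $b$ times around the core of $W$ and $a$ times around the core of $W'$. Writing $u$ for a generator of $\pi_1(V')\cong\pi_1(W')$ and $v$ for a generator of $\pi_1(V)\cong\pi_1(W)$, the Seifert--van Kampen theorem then gives
\[
G_{a,b} = \pi_1(X) = \langle u\rangle *_{\langle u^a = v^b\rangle}\langle v\rangle = \langle u,v\mid u^a v^{-b}\rangle,
\]
since the core of $A$ represents $u^a$ in $\pi_1(V')$ and $v^b$ in $\pi_1(V)$. The remaining claims in the first assertion follow from the structure of amalgamated free products: $u^a = v^b$ commutes with both $u$ and $v$ and so is central, while conversely a normal-form argument for amalgamated free products shows that a central element of a nontrivial such product must lie in the amalgamating subgroup $\langle u^a\rangle\cong\integer$; quotienting by this subgroup collapses $\langle u\rangle$ and $\langle v\rangle$ to $\integer_{|a|}$ and $\integer_{|b|}$, so $G_{a,b}/Z(G_{a,b})\cong\integer_{|a|}*\integer_{|b|}$.

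Next I would establish part~\ref{part:meridian} by locating the meridian and longitude on $T$. The longitude of a torus knot is the curve running parallel to $T_{a,b}$ on the Heegaard torus, corrected to be null-homologous in $X$: the core of $A$ is isotopic to $T_{a,b}$ in $T$ and represents $u^a$ in $G_{a,b}$, and its framing relative to the Seifert framing is $ab$, so subtracting $ab$ meridians gives $\lambda = u^a\mu^{-ab}$. For the meridian I would take a simple closed curve on $T$ meeting $T_{a,b}$ transversely in one point; in the homology basis of $T$ in which $T_{a,b}$ has coordinates $(a,b)$ such a curve has coordinates $(c,d)$ with $ad - bc = \pm1$, and pushing it off $T$ into $X$ and following it through the van Kampen decomposition expresses it as $v^d u^{-c}$, with orientations chosen so that $ad - bc = +1$. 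The delicate point — and the main obstacle in the whole argument — is the bookkeeping of orientation and basepoint conventions that fixes which solid torus contributes $u$ versus $v$ and pins down the exponents and signs in $\mu = v^d u^{-c}$; checking the formula against the trefoil $T_{2,3}$ (where $(c,d) = (1,2)$ gives $\mu = v^2 u^{-1}$) is a helpful consistency test.

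Finally, for the isomorphism criterion, one direction is immediate: if $|a| = |a'|$, $|b| = |b'|$ (or the same with the roles of $a'$ and $b'$ swapped) then $T_{a,b}$ and $T_{a',b'}$ are isotopic or mirror images, and in either case have isomorphic groups. For the converse it suffices, by the first assertion and the canonical passage to the central quotient, to recover the unordered pair $\{|a|,|b|\}$ from the isomorphism type of $\integer_{|a|}*\integer_{|b|}$; this follows from the Kurosh subgroup theorem, since in a free product $\integer_m*\integer_n$ with $m,n\geq2$ the maximal finite subgroups are precisely the conjugates of the two visible factors, so their isomorphism types together with the two conjugacy classes they occupy are invariants of the group. (If $|a| = 1$ or $|b| = 1$ the knot is unknotted and $G_{a,b}\cong\integer$, a case treated separately and consistently with the statement.) Apart from the convention-tracking in part~\ref{part:meridian}, the argument is routine given the Heegaard picture and standard combinatorial group theory.
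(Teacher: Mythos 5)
The paper does not actually prove this theorem: it is imported wholesale from Burde--Zieschang--Heusener (Prop.~3.38), with only a footnote reconciling the two editions' conventions, and everything downstream treats it as a black box. So your proposal is not an alternative to the paper's argument but a reconstruction of the cited source's, and as such it is essentially the standard one. Parts (1) and (3) of your outline are sound: the Seifert--van Kampen computation from the two solid tori glued along an annulus, the normal-form argument locating the centre in the amalgamating subgroup, and the recovery of $\{|a|,|b|\}$ from $\integer_{|a|}*\integer_{|b|}$ via Kurosh are all correct and complete in the relevant sense.

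The one step that does not work as literally written is the meridian identification in part~\ref{part:meridian}. A simple closed curve on the Heegaard torus $T$ meeting $T_{a,b}$ transversely in one point is not contained in the knot exterior $X$, so ``pushing it off $T$ into $X$'' does not produce a closed curve there; one must first reroute the offending arc around $\partial N(T_{a,b})$, and then argue that the resulting curve is genuinely \emph{the} meridian (i.e.\ bounds a disc in $N(T_{a,b})$) rather than merely a curve of linking number one. The homological bookkeeping you invoke --- $u\mapsto b$, $v\mapsto a$ in $H_1(X)\cong\integer$, so $v^du^{-c}\mapsto ad-bc=1$ --- only pins down the class of $\mu$ in the abelianisation, not its conjugacy class in $G_{a,b}$ or its position in the peripheral subgroup, and it is exactly this finer identification (together with the sign/basepoint conventions you defer) that constitutes the content of the statement, as the footnote about the discrepancy between $v^du^{-c}$ and $u^cv^d$ across editions already signals. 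You have correctly flagged this as the delicate point, but flagging it is not the same as closing it; for the purposes of this paper the honest resolution is the one the authors take, namely to cite the reference where that bookkeeping is carried out.
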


By the above theorem, the peripheral system of the $(-a,b)$-torus knot is described by $(G_{-a,b},\mu,\lambda)
=(\langle u,v|u^{-a}v^{-b}\rangle,v^{d'}u^{-c'},u^{-a}\mu^{ab})$, where $(-a)d'-bc'=1$. In what follows it will be convenient to have a description that differs from that of the $(a,b)$-torus knot only in the expression for the longitude. To achieve this, let $t=v^{-1}$. Then $G_{-a,b}$ has presentation $\langle u,t|u^{-a}t^b\rangle$, and since the relation $u^{-a}t^b=1$ is equivalent to $u^at^{-b}=1$ we may write $G_{-a,b}=\langle u,v|u^at^{-b}\rangle$. The meridian is then given by $\mu=t^{-d'}u^{-c'}$, where $(-a)d'-bc'=a(-d')-bc'=1$, and setting $d=-d'$, $c=c'$ we obtain $\mu=t^d u^{-c}$ with $ad-bc=1$. Thus, replacing $t$ by $v$, the peripheral systems of $T_{a,b}$ and $T_{-a,b}$ are given by
\begin{align*}
(\pi_1(K),\mu,\lambda)=
\begin{cases}
(\langle u,v|u^av^{-b}\rangle,v^du^{-c},u^a\mu^{-ab})
 & \text{for $K=T_{a,b}$},  \\
(\langle u,v|u^av^{-b}\rangle,v^du^{-c},u^{-a}\mu^{ab})
 & \text{for $K=T_{-a,b}$},
\end{cases}
\end{align*}
differing only in the expressions for the longitudes. Since we don't require zero-framed longitudes, in what follows we will use $u^a$ as the longitude for $T_{a,b}$, and $u^{-a}$ as the longitude for $T_{-a,b}$.

To find presentations for the generalised knot groups of the square and granny knot analogues we use the fact that if $K_1,K_2$ have peripheral systems $(G_1,\mu_1,\lambda_1),(G_2,\mu_2,\lambda_2)$, then $K_1\#K_2$ has peripheral system $(G_1*_{\langle\mu\rangle}G_2,\mu,\lambda_1\lambda_2)$, where $\mu=\mu_1=\mu_2$. Let 
\[
H_{a,b}=\langle x,y,w,z | x^a = y^b, w^a = z^b, y^{d}x^{-c}=z^{d}w^{-c} \rangle.
\]
Then $\pi_1(\GK{a}b)\cong\pi_1(\SK{a}b)\cong H_{a,b}$, and the two knot groups have common meridian $\mu=y^dx^{-c}=z^{d}w^{-c}$, but different (non-zero framed) longitudes $\lambda_{\GK{a}b}=x^aw^a$ and $\lambda_{\SK{a}b}=x^aw^{-a}$.
Then presentations for $G_n(\GK{a}b)$ and $G_n(SK{a}b)$ are given by
\begin{align*}
G_n(GK_{a,b}) &\cong \langle x,y,w,z, \nu | x^a = y^b, w^a = z^b,  \nu^n =y^{d}x^{-c}=z^{d}w^{-c}, x^aw^a\nu = \nu x^aw^a\rangle,  \\
G_n(SK_{a,b}) &\cong \langle x,y,w,z, \nu | x^a = y^b, w^a = z^b,  \nu^n =y^{d}x^{-c}=z^{d}w^{-c},  x^aw^{-a}\nu =\nu  x^aw^{-a}\rangle.
\end{align*}
Observe that the two presentations differ only in the final relation, expressing the fact that the generator $\nu$ commutes with the longitude. 

\section{The target groups}
\label{sec:target}

\subsection{Introduction}
\label{sec:target.intro}

To detect the difference between the generalised knot groups of $SK=\SK23$ and $GK=\GK23$ Tuffley~\cite{tuffley-2007} used as target groups wreath products of the form 
\[
\mathcal{H}_p^{q,r} = \olddihedral{q}r\wr PSL(2,p),
\]
where $p,q,r$ are distinct primes and \olddihedral{p}q\ is a semidirect product
\[
\olddihedral{q}r = (\integer/q\integer)^{r-1}\rtimes(\integer/r\integer).
\]
The key property of $PSL(2,p)$ used was that it contains nontrivial solutions to the $(2,3)$-torus knot relation $x^3=y^2$; and moreover, any solution to this equation satisfies either $x^3=y^2=1$, or $x=z^2$, $y=z^3$ for some $z$ (Theorem~A.1 of~\cite{tuffley-2007}, proved by David Savitt). By choosing $p$ coprime to $n$ we can further ensure that there are homomorphisms $G_{2,3}\to PSL(2,p)$ such that the image of the meridian has an $n$th root. 

To detect the difference between the generalised knot groups of $\SK{a}{b}$ and $\GK{a}{b}$ we will replace $PSL(2,p)$ with  a group where a similar characterisation holds for solutions to the $(a,b)$-torus knot relation $x^a=y^b$. We will also replace $\olddihedral{q}r$ with a subgroup of itself of the form
\[
\dihedral{q}r = (\integer/q\integer)^{d}\rtimes(\integer/r\integer),
\]
where $d$ is the smallest positive integer satisfying $q^d\equiv1\bmod r$. This will ensure that $\dihedral{q}r$ is generated by any element of order $q$, together with any element of order $r$.

For distinct primes $s$ and $t$ such that $s| a$ and $t| b$ the group $\dihedral{s}{t}$ will serve as a suitable replacement for $PSL(2,p)$. Thus, our target groups will be wreath products of the form
\[
\W{q}rst=\dihedral{q}r\wr\dihedral{s}t.
\]
We describe these groups in detail below.

\subsection{The construction of the generalised dihedral groups $\dihedral{q}r$}
\label{sec:dihedral.defn}

Before constructing $\dihedral{q}r$ we first review the construction of  $\olddihedral{q}{r}$~\cite{tuffley-2007}. We introduce the following notation:

\begin{notation}
Given a prime power $\alpha$, we write $\gf{\alpha}$ for the finite field of order $\alpha$. 
\end{notation}

For $q$ and $r$ distinct primes the group $\olddihedral{q}r$ is a semidirect product 
\[
\olddihedral{q}r = (\integer_q)^{r-1}\rtimes\integer_r.
\]
To define multiplication in $\olddihedral{q}r$ we regard $(\integer_q)^{r-1}$ as the additive group of the finite field $\mathbb{F}_{q^{r-1}}$. The multiplicative group $\mathbb{F}^{\times}_{q^{r-1}}$ is cyclic of order $q^{r-1} -1\equiv 0\bmod r$, and so contains an element $\zeta$ of order $r$. Then for $i\in\integer_r$ and $v\in(\integer_q)^{r-1}$ the equation   $i\cdot v=\zeta^iv$ defines an action of $\integer_r$ on $(\integer_q)^{r-1}$ by automorphisms, and we use this action to form the semidirect product. Thus, multiplication in \olddihedral{q}r\ is defined by
\[
(v,i) \cdot (u,j) = (v+ \zeta^i u, i+j).
\]

It is shown in~\cite[Lemma~3.2]{tuffley-2007} that every element of \olddihedral{q}r\ has order $1$, $q$ or $r$. To construct \dihedral{q}r\ we will pass to a subgroup of \olddihedral{q}r\ that has the property that it is generated by any element of order $q$, together with any element of order $r$. We describe the construction of \dihedral{q}r\ independently of the construction of \olddihedral{q}r, so that it is clear that the isomorphism type depends only on $q$ and $r$.

\begin{definition}
  Given distinct primes $q$ and $r$, let $(\gf{q})^{(r)}$ be the $r$th cyclotomic field over $\gf{q}$ (the splitting field of $x^r-1$ over $\gf{q}$), and let $E_q^{(r)}\subseteq (\gf{q})^{(r)}$ be the $r$th roots of unity in $(\gf{q})^{(r)}$. Then with respect to multiplication $E_q^{(r)}$ is a cyclic group of order $r$. 
Let $\Vee{q}{r}$ be the additive group of $(\gf{q})^{(r)}$. We define the \textbf{generalised dihedral group} $\dihedral{q}r$ to be the semidirect product
\[
\dihedral{q}r = \Vee{q}{r}\rtimes E_q^{(r)},
\]
where $E_q^{(r)}$ acts on \Vee{q}{r}\ by multiplication. That is, for $v_1,v_2\in\Vee{q}{r}$ and $\zeta_1,\zeta_2\in E_q^{(r)}$, multiplication in $\Vee{q}{r}\rtimes E_q^{(r)}$ is defined by 
\[
(v_1,\zeta_1)\cdot(v_2,\zeta_2) = (v_1+\zeta_1v_2,\zeta_1\zeta_2). 
\]
\end{definition}

By~\cite[Thm~2.47]{lidl-1997} we have $[(\gf{q})^{(r)}:\gf{q}]=\ord_r(q)$, where $\ord_r(q)$ is the multiplicative order of $q$ modulo $r$; that is, the least positive integer $d$ such that $q^d\equiv 1\bmod r$. Thus
we may identify $\Vee{q}{r}$ with $(\integer_q)^{\ord_r(q)}$. Moreover, given a nontrivial choice of $\zeta\in E_q^{(r)}$ we may identify $E_q^{(r)}=\langle\zeta\rangle$ with $\integer_r$. Therefore we may regard
\dihedral{q}r\ as a semidirect product of the form
$\integer_q^{\ord_r(q)}\rtimes\integer_r$,
with multiplication defined by 
\[
(v,i) \cdot (u,j) = (v+ \zeta^i u, i+j).
\]
We necessarily have $(\gf{q})^{(r)}\leq\gf{q^{r-1}}$, so $\dihedral{q}r\leq\olddihedral{q}r$, with equality when $q$ generates $U(r)$, the group of units mod $r$.  

\begin{remark}
\label{rem:dihedral}
In the case $r=2$ we have $(\gf{q})^{(2)}=\gf{q}$, $E_q^{(2)}=\{\pm1\}$ and
$\dihedral{q}2\cong\integer_q\rtimes\integer_2\cong D_q$, the dihedral group of order $2q$ given by the symmetries of a regular $q$-gon.
\end{remark}

\subsection{Properties of $\dihedral{q}r$}

We establish some properties of $\dihedral{q}r$ that will be used in what follows. 

By construction $\Vee{q}r$ is a normal subgroup of \dihedral{q}r, and
$\dihedral{q}r/\Vee{q}r\cong E_q^{(r)}$. Choosing $1\neq\zeta\in E_q^{(r)}$ as above and identifying $E_q^{(r)}=\langle\zeta\rangle$ with $\integer_r$ via $\zeta^i\leftrightarrow i$ we therefore get a homomorphism $\dihedral{q}{r}\to\integer_r$. Given $g\in\dihedral{q}r$ we write $[g]$ for the image of $g$ in $\integer_r$ under this map; thus if $g=(v,i)\in\Vee{q}r\rtimes\integer_r$ then $[g]=i$.

Our first lemma, on elements of $\dihedral{q}r$, follows almost immediately from the corresponding result~\cite[Lemma 3.2]{tuffley-2007} for $\olddihedral{q}r$. 
\begin{lemma}
\label{lem:dihedralprops} ~

\begin{enumerate}
\item\label{item.dihedralprops.order}
If $g\in\dihedral{q}r$ then the order of $g$ is 1, $q$, or $r$. 
\item\label{item.dihedralprops.centraliser}
 If $g,h \in\dihedral{q}r$ commute, then either $g,h\in\Vee{q}r$, or $g$ and $h$ belong to the same cyclic subgroup of order $r$.  
\item\label{item.dihedralprops.conjugacy}
 If  $g = (v, 0)\in\dihedral{q}r$ has order $q$, then the conjugacy class of $g$ is 
\[
\{(\zeta^i v, 0): 0 \leq i \leq r-1\},
\] 
and if $g=(v,i)$ has order $r$ then the conjugacy class of $g$ is 
\[
\{ h : [h] = i \} = \{(v,i): v \in\Vee{q}r \}.
\] 
\end{enumerate}
\end{lemma}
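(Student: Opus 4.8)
The plan is to deduce all three parts from the corresponding facts about $\olddihedral{q}r$ established in \cite[Lemma 3.2]{tuffley-2007}, using the containment $\dihedral{q}r\leq\olddihedral{q}r$ noted above, supplemented by a direct computation for the conjugacy classes. Part~\ref{item.dihedralprops.order} is immediate: the order of an element is unchanged on passing to a subgroup, so every element of $\dihedral{q}r\leq\olddihedral{q}r$ has order $1$, $q$, or $r$ by \cite[Lemma 3.2]{tuffley-2007}.

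For part~\ref{item.dihedralprops.centraliser}, suppose $g,h\in\dihedral{q}r$ commute. Applying the $\olddihedral{q}r$ version of the statement, either $g$ and $h$ both lie in the base group $(\integer_q)^{r-1}$, in which case they lie in $(\integer_q)^{r-1}\cap\dihedral{q}r=\Vee{q}r$, or they lie in a common cyclic subgroup $C$ of order $r$ in $\olddihedral{q}r$. In the latter case, unless both $g,h$ are trivial (when the conclusion is immediate) at least one of them has order $r$ and lies in $\dihedral{q}r$, so $C$ is generated by an element of $\dihedral{q}r$ and hence $C\leq\dihedral{q}r$; thus $g$ and $h$ lie in a common cyclic subgroup of order $r$ of $\dihedral{q}r$, as required.

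For part~\ref{item.dihedralprops.conjugacy} I would argue directly, since conjugacy classes need not restrict cleanly to a subgroup. Using the multiplication rule and the presentation $\dihedral{q}r=\integer_q^{\ord_r(q)}\rtimes\integer_r$, a short calculation shows that conjugating $g=(v,i)$ by $(u,j)$ yields $\bigl((1-\zeta^i)u+\zeta^j v,\,i\bigr)$, where $\zeta$ is the fixed primitive $r$th root of unity chosen in the construction. When $g=(v,0)$ has order $q$ this is $(\zeta^j v,0)$, and letting $j$ range over $\integer_r$ produces exactly the $r$ distinct elements $\{(\zeta^i v,0):0\leq i\leq r-1\}$ (distinct because $v\neq0$ and $\zeta$ has order $r$). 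When $g=(v,i)$ has order $r$ we have $\zeta^i\neq1$, so $1-\zeta^i$ is a nonzero element of the field $(\gf{q})^{(r)}$ and hence invertible; therefore $(1-\zeta^i)u$ ranges over all of $\Vee{q}r$ as $u$ does, and the conjugacy class contains every $(w,i)$ with $w\in\Vee{q}r$. Conversely $h\mapsto[h]$ is a homomorphism to the abelian group $\integer_r$, so conjugation preserves it, and the conjugacy class is contained in $\{h:[h]=i\}$; this gives the stated equality.

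The step requiring the most care is part~\ref{item.dihedralprops.conjugacy}: one must resist simply restricting the ambient conjugacy classes and instead verify that conjugation \emph{within} the smaller group $\dihedral{q}r$ still sweeps out the full fibre $\{h:[h]=i\}$. The crucial input is the invertibility of $1-\zeta^i$ in $(\gf{q})^{(r)}$ for $i\neq0$, which is precisely where the field structure of $\Vee{q}r$ (rather than merely its group structure) is used.
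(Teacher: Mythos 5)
Your proposal is correct, and for parts~\eqref{item.dihedralprops.order} and~\eqref{item.dihedralprops.centraliser} it matches the paper, which likewise deduces them from the containment $\dihedral{q}r\leq\olddihedral{q}r$ and \cite[Lemma 3.2]{tuffley-2007} (you are in fact a little more careful than the paper on part~\eqref{item.dihedralprops.centraliser}, checking that the ambient cyclic subgroup of order $r$ actually lies inside $\dihedral{q}r$; this is a genuine, if small, point and your resolution of it is right). For part~\eqref{item.dihedralprops.conjugacy} your route differs from the paper's: you compute the conjugation formula $(u,j)(v,i)(u,j)^{-1}=\bigl((1-\zeta^i)u+\zeta^jv,\,i\bigr)$ directly and use the invertibility of $1-\zeta^i$ in the field $(\gf q)^{(r)}$ to see that the class of an order-$r$ element sweeps out the whole fibre $\{h:[h]=i\}$, whereas the paper argues by counting: part~\eqref{item.dihedralprops.centraliser} pins down the centraliser orders, orbit--stabiliser gives the class sizes $r$ and $|\Vee{q}r|$, and these sizes force the classes to equal the obvious candidate sets. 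Your computation is more self-contained and explains \emph{why} the fibre is filled (it is exactly where the field structure of $\Vee{q}r$ enters), while the paper's argument is shorter and reuses part~\eqref{item.dihedralprops.centraliser}; both are complete proofs.
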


\begin{proof}
Since $\dihedral{q}r\leq\olddihedral{q}r$, parts~\eqref{item.dihedralprops.order} and~\eqref{item.dihedralprops.centraliser} are immediate from~\cite[Lemma 3.2]{tuffley-2007}. Part~\eqref{item.dihedralprops.conjugacy} then follows by the same argument given there, which we now outline. By
part~\eqref{item.dihedralprops.centraliser} the centraliser of $g\in\dihedral{q}r$
has order $|\Vee{q}r|$ if $g$ has order $q$, and order $r$ if $g$ has
order $r$. Thus, the conjugacy class of $g$ has size $r$ if $g$ has
order $q$, and size $|\Vee{q}r|$ if $g$ has order $r$.
Part~\eqref{item.dihedralprops.conjugacy} now follows from the fact that the
conjugacy class of $(v,0)$ must contain the $r$ element set consisting
of the orbit of $v$ under the action of $\langle\zeta\rangle$, and the
fact that the conjugacy class of $(v,i)$ is contained in the
set $\{h:[h]=i\}$ of size $|\Vee{q}r|$.
\end{proof}

\begin{remark}
\label{rem:nthroot}
Since $q$ and $r$ are prime, as a consequence of Lemma~\ref{lem:dihedralprops} we note that if $g$ is a nontrivial element of $\dihedral{q}{r}$, then $g$ has an $n$th root in $\dihedral{q}r$ if and only if the order of $g$ is coprime to $n$. In this case the $n$th root is unique and is equal to $g^k$, where
$g^k$ is the unique $n$th root of $g$ in $\langle g\rangle$. 
In particular, $g$ has a unique $n$th root if $\gcd(qr,n)=1$. 
\end{remark}

We now show that $\dihedral{q}r$ is generated by any element of order $q$, together with any element of order $r$. 
\begin{lemma}
\label{lem:dihedral.generators}
Let $\alpha$ and $\beta$ be elements of \dihedral{q}r\ of orders $q$ and $r$, respectively. Then $\langle\alpha,\beta\rangle=\dihedral{q}r$. 
\end{lemma}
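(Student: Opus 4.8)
The plan is to exploit the structure of $\dihedral{q}r$ as the split extension $1\to\Vee{q}r\to\dihedral{q}r\to E_q^{(r)}\to 1$ with $E_q^{(r)}\cong\integer_r$. Set $G=\langle\alpha,\beta\rangle$. Since $\beta$ has order $r$, its image $[\beta]$ in $\integer_r$ is nontrivial, hence generates $\integer_r$; so $G$ surjects onto $\integer_r$ and it remains to show $\Vee{q}r\leq G$. Because $\alpha$ has order $q$ it lies in $\Vee{q}r$ (by Lemma~\ref{lem:dihedralprops}\eqref{item.dihedralprops.order}, the only elements outside $\Vee{q}r$ have order $r$), and $\alpha\neq 0$. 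So $N:=G\cap\Vee{q}r$ is a nonzero $\integer_q$-subspace of $\Vee{q}r$ that is stable under conjugation by $\beta$, i.e.\ under multiplication by $\zeta:=[\beta$'s action$]$, a primitive $r$th root of unity in $(\gf{q})^{(r)}$.

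The key step is then purely field-theoretic: $\Vee{q}r$ is the additive group of $K:=(\gf{q})^{(r)}$, and multiplication by $\zeta$ makes it a module over the subring $\gf{q}[\zeta]\subseteq K$. But $\zeta$ generates $K$ over $\gf{q}$ — indeed $K=(\gf{q})^{(r)}$ is by definition the splitting field of $x^r-1$ over $\gf{q}$, which is generated by any primitive $r$th root of unity — so $\gf{q}[\zeta]=K$. Thus $\Vee{q}r=K$ is a one-dimensional vector space over $\gf{q}[\zeta]=K$, i.e.\ a \emph{simple} $\gf{q}[\zeta]$-module. Since $N$ is a nonzero $\gf{q}[\zeta]$-submodule (nonzero because it contains $\alpha$, a submodule because it is $\gf{q}$-linear and $\zeta$-stable), simplicity forces $N=\Vee{q}r$. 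Hence $\Vee{q}r\leq G$, and combined with the surjection onto $\integer_r$ this gives $G=\dihedral{q}r$.

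I expect the only real subtlety to be the justification that the $\integer_r$-action on $N$ by conjugation by $\beta$ really is ``multiply by $\zeta$'' in the field sense, rather than by some other primitive $r$th root of unity depending on $\beta$. This is handled by observing that the conjugation action of any order-$r$ element on $\Vee{q}r$ is multiplication by \emph{some} primitive $r$th root of unity $\zeta'\in E_q^{(r)}$ (this is exactly how the semidirect product is built: if $\beta=(w,\zeta')$ then $\beta(v,0)\beta^{-1}=(\zeta' v,0)$), and since every primitive $r$th root of unity generates the same field extension $K$ over $\gf{q}$, the argument of the previous paragraph applies verbatim with $\zeta'$ in place of $\zeta$. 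Everything else — that $\alpha\in\Vee{q}r$, that $N$ is $\gf{q}$-linear, that $[\beta]$ generates $\integer_r$ — is immediate from the preceding lemmas and the construction, so no lengthy computation is required.
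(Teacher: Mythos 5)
Your proof is correct, and it rests on the same underlying mechanism as the paper's --- namely that conjugation by $\beta$ acts on $\Vee{q}r$ as multiplication by a primitive $r$th root of unity $\zeta$, and that $\zeta$ generates the cyclotomic field $(\gf{q})^{(r)}$ over $\gf{q}$ --- but you close the argument by a genuinely different route. The paper explicitly computes $\beta^j\alpha\beta^{-j}=(\zeta^j u,0)$ and then proves that $u,\zeta u,\dots,\zeta^{d-1}u$ are linearly independent over $\gf{q}$, where $d=\ord_r(q)$; for this it invokes the fact (quoted from Lidl--Niederreiter) that the $r$th cyclotomic polynomial factors over $\gf{q}$ into irreducibles of degree exactly $d$, so that the minimal polynomial of $\zeta$ has degree $d$ and no shorter dependence relation can exist. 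You instead observe that $N=\langle\alpha,\beta\rangle\cap\Vee{q}r$ is a nonzero $\gf{q}[\zeta]$-submodule of $(\gf{q})^{(r)}$, that $\gf{q}[\zeta]=(\gf{q})^{(r)}$ because the splitting field of $x^r-1$ is generated by any primitive $r$th root of unity, and that a field is a simple module over itself; this bypasses the cyclotomic factorisation theorem entirely and replaces the linear-independence computation with the observation that a nonzero additive subgroup of a field closed under multiplication by the whole field must be the whole field. Your version is more abstract but arguably cleaner, and it isolates exactly what is being used, namely $\gf{q}(\zeta)=(\gf{q})^{(r)}$. The one point you flag as a possible subtlety --- that conjugation by $\beta$ is multiplication by \emph{some} primitive $r$th root of unity rather than a preferred one --- you dispose of correctly, since $r$ is prime, so every nontrivial element of $E_q^{(r)}$ is primitive and generates the same extension; and the remaining small steps (that $\alpha\in\Vee{q}r$ because elements outside $\Vee{q}r$ have order $r$, that any subgroup of the elementary abelian group $\Vee{q}r$ is an $\gf{q}$-subspace, and that $\Vee{q}r\leq G$ together with $[\beta]\neq 1$ forces $G=\dihedral{q}r$ since $[\dihedral{q}r:\Vee{q}r]=r$ is prime) are all sound.
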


\begin{proof}
Let $\alpha=(u,0)$, and without loss of generality let $\beta=(v,1)$. Write $\beta^j=(v_j,j)$. Then 
\begin{align*}
\langle\alpha\rangle &= \{(ku, 0) : 0\leq k\leq q-1\},  \\
\langle\beta\rangle  &= \{(v_j,j) : 0\leq j\leq r-1 \}.
\end{align*}
Letting $\langle\beta\rangle$ act on $\alpha$ by conjugation we see that  
\begin{align*}
\beta^j \alpha \beta^{-j} &=(v_j, j) (u, 0) (v_j, j)^{-1} \\
                 &=(v_j, j) (u, 0) (- \zeta^{-j} v_j, -j) 
                                                  &       &\text{(since $(v_j, j)^{-1}=(-\zeta^{-j} v_j, -j)$)} \\
                  &=(v_j, j) (u- \zeta^{-j} v_j, -j) \\
                  &=(v_j+ \zeta^j(u- \zeta^{-j} v_j), j-j) \\
                  &= (\zeta^j u, 0),
\end{align*}
so $(\zeta^j u, 0)$ belongs to $\langle \alpha, \beta \rangle$ for  $0\leq j\leq q-1$.

Regarding $\Vee{q}r\cong(\integer_q)^{d}$ as a vector space over $\gf{q}$ of dimension $d=\ord_r(q)$ we claim that the vectors $u,\zeta u,\dots, \zeta^{d-1}u$ are linearly independent, and therefore span $\Vee{q}r$. To show this we use the fact that $\zeta$ is a root of the $r$th cyclotomic polynomial over \gf{q}, which since $r$ is prime and coprime to $q$ is given by
\[
Q_r(x) = \frac{x^r-1}{x-1} = \sum_{k=0}^{r-1} x^r. 
\]
Moreover, by~\cite[Thm~3.47]{lidl-1997}, $Q_r(x)$ factors over \gf{q}\ into $(r-1)/d$ monic irreducible polynomials of degree $d$. In particular, this means that the irreducible polynomial of $\zeta$ over $\gf{q}$ has degree $d$. 

Suppose now that
\begin{equation}
c_0 u+c_1 \zeta u+ \dots+ c_{d-1} \zeta^{d-1} u=0,
\label{eqn:dependence}
\end{equation}
where $c_i \in\gf{q}$ for $i=0, \dots, d-1$. Factoring~\eqref{eqn:dependence} we get
\[
\left[\sum_{i=0}^{d-1} c_i \zeta^i\right] u =0,
\]
so $\sum_{i=0}^{d-1} c_i \zeta^i = 0 \in \gf{q}$. But then if some $c_i$ is nonzero $\zeta$ is a root of $p(x)=\sum_{i=0}^{d-1} c_i x^i$, a polynomial over \gf{q}\ of degree at most $d-1$. This is a contradiction, so we must have $c_i=0$ for all $i$.  Thus $\{\zeta^i u :   0\leq i\leq d-1\}$ is a linearly independent set, as claimed. 

We have now shown that $\Vee{q}{r}\subseteq\langle\alpha,\beta\rangle$, which implies $q^d$ divides $|\langle\alpha,\beta\rangle|$. Since $r$ necessarily also divides $|\langle\alpha,\beta\rangle|$, the order of $\langle\alpha,\beta\rangle$ is divisible by $q^dr=|\dihedral{q}r|$. It follows that we must have $\langle\alpha,\beta\rangle=\dihedral{q}r$, as claimed. 
\end{proof}

A homomorphism from $G_{a,b}$ to a group $G$ corresponds to a solution to the equation $x^a=y^b$ in $G$. We make the following definition:

\begin{definition}
Let $G$ be a group, and let $a$ and $b$ be positive integers.
A solution $(x,y)=(g,h)$ to $x^a=y^b$ in $G$ is a \textbf{cyclic solution} if $\langle g,h \rangle$ is a cyclic subgroup of $G$. Otherwise, the solution is \textbf{non-cyclic}.
\end{definition} 

We now characterise when $\dihedral{q}r$ has a non-cyclic solution to $x^a = y^b$, for $a$ and $b$ relatively prime.
\begin{lemma}\label{lem:non-cyclicsolutions}
Let $a,b\geq2$ be relatively prime positive integers. Any non-cyclic solution $(x,y)=(g,h)$ to $x^a=y^b$ in $\dihedral{q}r$ satisfies $g^a=h^b=1$. Consequently, such a solution exists if and only if $q|a$ and $r|b$, or $r|a$ and $q|b$. 

\end{lemma}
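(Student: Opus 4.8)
The plan is to analyse a non-cyclic solution $(g,h)$ to $x^a=y^b$ in $\dihedral{q}r$ by using Lemma~\ref{lem:dihedralprops}\eqref{item.dihedralprops.order}, which tells us every element has order $1$, $q$ or $r$. First I would observe that since $\gcd(a,b)=1$, the common value $z=g^a=h^b$ is simultaneously an $a$th power and a $b$th power; writing $1=\alpha a+\beta b$ we get $z=z^{\alpha a+\beta b}=(h^b)^\alpha(g^a)^\beta\in\langle g,h\rangle$, but more usefully $g^a=h^b$ lies in $\langle g\rangle\cap\langle h\rangle$ only in the cyclic case, so I want to show $z=1$ directly. The key point is that if $g$ has order $q$ then $g^a=1$ unless $q\nmid a$, in which case $g^a$ still has order $q$; similarly for $h$. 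So suppose $z=g^a=h^b\neq1$. Then $z$ has order $q$ or $r$; say order $q$ (the case of order $r$ is symmetric). Then $q\nmid a$ and $g$ has order $q$ (it cannot have order $r$, since then $g^a$ would have order $1$ or $r$), and $z=g^a$ generates $\langle g\rangle$. Likewise, since $h^b=z$ has order $q$, the element $h$ has order $q$ and $\langle h\rangle=\langle z\rangle=\langle g\rangle$, so $g$ and $h$ lie in a common cyclic subgroup, contradicting non-cyclicity. Hence $z=1$, i.e. $g^a=h^b=1$.

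Next I would pin down the orders of $g$ and $h$ themselves. Since $g^a=1$ and $a\geq2$ is coprime to $b$, the order of $g$ divides $a$; as the order is $1$, $q$ or $r$, either $g=1$ or ($q|a$ or $r|a$ and $g$ has the corresponding prime order). Symmetrically the order of $h$ divides $b$. For the solution to be non-cyclic we need $\langle g,h\rangle$ non-cyclic, which in particular forces both $g\neq1$ and $h\neq1$ (if $g=1$ then $\langle g,h\rangle=\langle h\rangle$ is cyclic, and vice versa), and forces $g,h$ not to lie in a common cyclic subgroup. Using Lemma~\ref{lem:dihedralprops}\eqref{item.dihedralprops.centraliser} in reverse: if $g$ and $h$ both had order $r$, or both lay in $\Vee{q}r$ — wait, they need not commute, so I must instead argue via orders. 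If $g$ and $h$ both have order $r$ and generate a non-cyclic group, that is possible, but then $r|a$ and $r|b$, contradicting $\gcd(a,b)=1$. If both have order $q$, then $r|a$... no: order $q$ and $g^a=1$ gives $q|a$; similarly $q|b$, again contradicting coprimality. So one of $g,h$ has order $q$ and the other order $r$: say $g$ has order $q$, $h$ has order $r$, giving $q|a$ and $r|b$; the other case gives $r|a$ and $q|b$.

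Finally I would check the converse: if $q|a$ and $r|b$ (the other case being symmetric), I must exhibit an actual non-cyclic solution. Take $g=(v,0)$ with $v\neq0$, of order $q$, and $h=(0,1)$, of order $r$ — or more generally any element of order $r$ whose projection $[h]$ is nonzero. Then $g^a=1$ since $q|a$, and $h^b=1$ since $r|b$, so $(g,h)$ solves $x^a=y^b$ with both sides trivial. By Lemma~\ref{lem:dihedral.generators}, $\langle g,h\rangle=\dihedral{q}r$, which is non-cyclic (it is nonabelian, as $q,r\geq2$ are distinct primes and the action is nontrivial), so this is a non-cyclic solution. Conversely, if neither ($q|a$, $r|b$) nor ($r|a$, $q|b$) holds, the analysis above shows no non-cyclic solution exists. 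The main obstacle is the first paragraph — carefully ruling out $g^a=h^b\neq1$ — since one must handle the interplay between the order of the power $g^a$ and the order of $g$ itself, and the symmetric role of $q$ and $r$; once $g^a=h^b=1$ is established, the rest is bookkeeping with the order constraints and an appeal to Lemma~\ref{lem:dihedral.generators}.
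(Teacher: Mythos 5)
Your proof is correct and follows essentially the same route as the paper: you use the order classification of Lemma~\ref{lem:dihedralprops} to show that a nontrivial common value $g^a=h^b$ forces $\langle g\rangle=\langle h\rangle$ and hence cyclicity, then derive the divisibility conditions from coprimality, and invoke Lemma~\ref{lem:dihedral.generators} for the converse. Your version is slightly more explicit than the paper's (e.g.\ in noting that non-cyclicity forces $g\neq1\neq h$ before arguing about orders), but the substance is identical.
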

\begin{proof}
Suppose that $(x,y)=(g,h)$ is a solution to $x^a = y^b$ in $\dihedral{q}r$ such that $g^a=h^b=k \neq1$.
By Lemma~\ref{lem:dihedralprops} the subgroups $\langle g\rangle$ and $\langle h\rangle$ are each cyclic of prime order $q$ or $r$, and as such they are generated by any nontrivial element. Thus $\langle g\rangle=\langle h\rangle=\langle k\rangle$, and it follows that $(g,h)$ is a cyclic solution.
Therefore any non-cyclic solution $(g,h)$ must satisfy $g^a=h^b=1$. 

Now let $(x,y)=(g,h)$ be a non-cyclic solution to $x^a=y^b$ in \dihedral{q}r. Then $g^a=1$, so $a$ must be divisible by at least one of $q$ and $r$; and similarly $h^b=1$, so $b$ must be divisible by at least one of $q$ and $r$ also. But $a$ and $b$ are relatively prime, so it must be the case that either 
$q|a$ and $r|b$, or $r|a$ and $q|b$. 

Finally, we prove that a non-cyclic solution does exist if $q|a$ and $r|b$, or $r|a$ and $q|b$.  Without loss of generality, assume $q|a$ and $r|b$. 
Let $g$ be any element of order $q$, and let $h$ be any element of order $r$. Then $g^a=1=h^b$, so $(x,y)=(g,h)$ is a solution to $x^a=y^b$; and moreover, it is a non-cyclic solution because $\langle g,h\rangle=\dihedral{q}r$, by Lemma~\ref{lem:dihedral.generators}. This completes the proof. 
\end{proof}

\begin{remark}
\label{rem:non-cyclicgenerates}
Note that by Lemmas~\ref{lem:dihedral.generators} and~\ref{lem:non-cyclicsolutions}, any non-cyclic solution $(x,y)=(g,h)$ to $x^a=y^b$ in \dihedral{q}r\ satisfies  
$\langle g,h\rangle=\dihedral{q}r$. 
\end{remark}

\subsection{The construction of the wreath products $\W{s}tqr$}
\label{sec.target.wreath}

We now describe the construction of our target groups $\W{q}rst$ as wreath products of the form $\dihedral{q}r\wr\dihedral{s}t$.

\begin{definition}
Given distinct primes $q,r,s,t$, let $\Cst\subseteq\dihedral{s}t$ be the conjugacy class
\[
\Cst=\{g\in\dihedral{s}t: [g]=1\}=\{g=(v,1)\in\dihedral{s}t:v\in\Vee{s}{t},1\in\integer_t\}.
\]
For $g\in\Cst$ and $\alpha\in\dihedral{s}t$ we write the right action of $\dihedral{s}t$ on $\Cst$ by conjugation as
\[
g\cdot\alpha = \alpha^{-1}g\alpha, 
\]
and we use this right action of $\dihedral{s}t$ on $\Cst$ to define a left action of $\Dst$ on
\[
(\dihedral{q}r)^\Cst = \{\omega=(\omega_g)_{g\in\Cst}: \omega_g\in\dihedral{q}r\}
\]
by
\[
(\alpha\cdot\omega)_g = \omega_{g\cdot\alpha}
\]
for all $\alpha\in\dihedral{s}t$ and $\omega\in(\dihedral{q}r)^\Cst$. (This is indeed a left action, because
\[
(\alpha\cdot(\beta\cdot\omega))_g  = (\beta\cdot\omega)_{g\cdot\alpha}
    = \omega_{(g\cdot\alpha)\cdot\beta} 
    = \omega_{g\cdot(\alpha\beta)} = ((\alpha\beta)\cdot\omega)_g, 
\]
and so $\alpha\cdot(\beta\cdot\omega)= (\alpha\beta)\cdot\omega$, as required.) We define $\W{q}r{s}t$ to be the semidirect product
\[
\W{q}r{s}t = \dihedral{q}r\wr\dihedral{s}t 
           = (\dihedral{q}r)^\Cst \rtimes\dihedral{s}t,
\]
where \dihedral{s}t\ acts on $(\dihedral{q}r)^\Cst$ as above.

Elements of $\W{q}rst$ have the form 
$\alpha = \bigl((\alpha_g)_{g\in\Cst},\hat\alpha\bigr)$, where
$\alpha_g\in\dihedral{q}r$ for all $g\in\Cst$ and $\hat\alpha\in\dihedral{s}t$. The group operations are given by
\begin{align*}
(\alpha\beta)_g &= \alpha_g\beta_{g\cdot\hat\alpha}, &
\widehat{\alpha\beta} &= \hat\alpha\hat\beta, \\
(\alpha^{-1})_g &= (\alpha_{g\cdot\hat\alpha^{-1}})^{-1}, &
\widehat{\alpha^{-1}} &= \hat\alpha^{-1}. 
\end{align*}

\end{definition}

\begin{remark}
\label{rem:cyclestructure}
We use Lemma~\ref{lem:dihedralprops} to describe the cycle structure of an element $\hat{\alpha} \in\Dst$ acting on $\Cst$ by conjugation.  Note that all elements of $\Cst$ have order $t$. If $\hat{\alpha}$ has order $s$, then $\alpha\in\Vee{s}t$. Elements of order $s$ and $t$ do not commute, and therefore the action of $\hat{\alpha}$ on $\Cst$ has no fixed point and all cycles are of length $s$, because $s$ is prime. If $\hat{\alpha}$ has order $t$, then it commutes only with elements of $\langle \hat{\alpha}\rangle$. 
Since $\hat{\alpha}^k \in \Cst$ for a unique $k$ satisfying $0\leq k\leq t-1$, the action of $\hat{\alpha}$ on $\Cst$ has a unique fixed point, and therefore all other cycles have length $t$, because $t$ is prime.
\end{remark}

We now describe some subgroups and homomorphisms associated with \W{q}rst\ that will be of use. The constructions parallel those of~\cite[Sec.~3.1]{tuffley-2007}. 
The quotient map $[\cdot]:\dihedral{q}r \to \mathbb{Z}_r$ induces a quotient map 
\[
\W{q}rst\to\integer_r\wr\dihedral{s}t =(\integer_r)^\Cst\rtimes\dihedral{s}t,
\] 
which is given by 
\[
[\alpha]=(([\alpha_g])_{g \in \Cst}, \hat{\alpha}).
\]
This map splits, and it will be convenient to distinguish a subgroup of \W{q}rst\ isomorphic to $\integer_r\wr\dihedral{s}t$. Fixing $\xi\in\dihedral{q}r$ of order $r$ such that $[\xi]=1$ we let
\[
\A{r}st = \langle\xi\rangle\wr\dihedral{s}t = \langle\xi\rangle^\Cst\rtimes\Dst\leq \W{q}rst.
\]
Since $\integer_r$ is abelian we may quotient further to get a well defined map 
\[
\W{q}rst\to (\integer_r)^\Cst\rtimes\dihedral{q}r \to \integer_r,
\]
given by
\[
[[\alpha]]= \sum_{g\in \Cst}[\alpha_g].
\]

\subsection{Illustration}
\label{sec:example}

As an aid to understanding the construction and arguments we illustrate the group operations in a group of the form $G=H\wr\dihedral{5}2$, where $H$ is an arbitrary group. 

As noted in Remark~\ref{rem:dihedral} we have $\dihedral{5}2=D_5$, the dihedral group of order 10, given by the symmetry group of a regular pentagon. For $k\in\integer_5$ define $\rho_k,\sigma_k:\integer_5\to\integer_5$ by
\begin{align*}
i\cdot\rho_k &= k+i, & 
i\cdot\sigma_k &= 2k-i.
\end{align*}
We write the argument to the left of the function so that composition is from left to right. Then
\[
\dihedral{5}2=D_5\cong\{\rho_k,\sigma_k:k\in\integer_5\}.
\]
In addition, $\Vee{5}2$ is the subgroup $\langle\rho_1\rangle=\{\rho_k:k\in\integer_5\}$ of rotations, and $\C52$ is the conjugacy class $\{\sigma_k:k\in\integer\}$ consisting of the reflections. We have 
\begin{align*}
\rho_k^{-1}\sigma_i\rho_k &= \sigma_{k+i} = \sigma_{i\cdot\rho_k}, \\
\sigma_k^{-1}\sigma_i\sigma_k &= \sigma_{2k-i}= \sigma_{i\cdot\sigma_k},
\end{align*}
so the action of $D_5$ on $\C52$ by conjugation may be identified with the action of $D_5$ on $\integer_5$. We may therefore regard $G=H\wr D_5$ as $H^{\integer_5}\rtimes D_5$. 

An element $\gamma$ of $G$ has the form $\bigl((\gamma_i)_{i\in\integer_5},\hat\gamma\bigr)$, where $\gamma_i\in H$ for each $i$ and $\hat\gamma\in D_5$. We may represent $\gamma$ as an edge labelled directed graph with vertex set $\integer_5$, where we draw a directed edge labelled $\gamma_i$  from vertex $i$ to vertex $i\cdot\hat\gamma$. 
Figures~\ref{fig:illustration}(i) and~\ref{fig:illustration}(ii) present such diagrams for elements $\alpha$ and $\beta$ such that $\hat\alpha=\rho_1$ and $\hat\beta=\sigma_0$.
The action of $D_5$ on $\integer_5$ is faithful, so for $\gamma\in G$ we can completely recover $\gamma$ from its associated edge labelled directed graph.  

Given group elements $\gamma$ and $\delta$ of $G$, to calculate the product $\gamma\delta$ we follow each directed edge of $\gamma$, and then follow the directed edge of $\delta$ beginning at its terminus, multiplying the labels. This gives a directed edge from $i$ to $i\cdot\widehat{\gamma\delta}= (i\cdot\hat\gamma)\cdot\hat\delta$ for each $i$, with label $\gamma_i\delta_{i\cdot\hat\gamma}$. So for example, in the diagram for $\alpha\beta$ in Figure~\ref{fig:illustration}(iii), we get an edge from $0$ to $4$ labelled $\alpha_0\beta_1$ by following the edge of $\alpha$ labelled $\alpha_0$ from $0$ to $1$, and then following the edge of $\beta$ labelled $\beta_1$ from $1$ to $4$.

Inverses are found by reversing all arrows and inverting all labels, as shown in Figure~\ref{fig:illustration}(iv) for the element $\alpha$ of  Figure~\ref{fig:illustration}(i). Note that the edge labelled $(\alpha_i)^{-1}$ now begins at $i\cdot\hat\alpha$ instead of $i$, and so $(\alpha^{-1})_i = (\alpha_{i\cdot\hat\alpha^{-1}})^{-1}$.

\begin{figure}
\setlength{\radius}{2cm}
\renewcommand{\arraystretch}{1.3}
\begin{center}
\setlength{\tabcolsep}{0.5cm}
\begin{tabular}{ll}
(i)\begin{tikzpicture}[vertex/.style={circle,draw,fill=black,minimum size = 1.5mm,inner sep=0pt},thick]
\foreach \x/\p in {0/above,1/{above left},2/{below left},3/{below right},4/{above right}}
{\node (\x) at (\x*72+90:\radius) [vertex] {};
 \node [\p] at (\x) {$\x$};}
\foreach \x in {0,1,...,4}
\node (mid\x) at (\x*72+54:\radius) {};
\foreach \x in {0,1,...,4} 
\draw[->] (mid\x) arc (\x*72+54:(\x+1)*72+53:\radius);
\foreach \x in {0,1,2,3,4}
   \node at (\x*72+126:1.2*\radius) {$\alpha_\x$};
\end{tikzpicture} & 
(ii)\begin{tikzpicture}[vertex/.style={circle,draw,fill=black,minimum size = 1.5mm,inner sep=0pt},thick,null/.style={minimum size = 0mm,inner sep=0mm}]
\foreach \x/\p in {0/{right},1/{above left},2/{below left},3/{below right},4/{above right}}
{\node (\x) at (\x*72+90:\radius) [vertex] {};
 \node [\p] at (\x) {$\x$};}
\foreach \x/\y/\p in {1/4/below,2/3/below,3/2/above,4/1/above}
   \draw (\x) edge [->,bend right] node [pos=0.75,\p] {$\beta_\x$} (\y);
\node (0label) at (90:1.5*\radius) [null] {};
\draw (0) edge [->,out=60,in=0] (0label); 
\draw (0label) edge [out=180,in=120] (0); 
\node [above] at (0label) {$\beta_0$};
\end{tikzpicture} \\
(iii)\begin{tikzpicture}[vertex/.style={circle,draw,fill=black,minimum size = 1.5mm,inner sep=0pt},thick,null/.style={minimum size = 0mm,inner sep=0mm}]
\foreach \x/\p in {0/{above},1/{left},2/{above},3/{right},4/{right}}
{\node (\x) at (\x*72+90:\radius) [vertex] {};
 \node [\p] at (\x) {$\x$};}
\foreach \x/\y/\z/\p in {1/3/2/{below},3/1/4/{above},4/0/0/{above},0/4/1/{below}}
   \draw (\x) edge [->,bend right] node [pos=0.7,\p] {$\alpha_\x\beta_\z$} (\y);
\node (2label) at (234:1.5*\radius) [null] {};
\draw (2) edge [->,out=204,in=144] (2label); 
\draw (2label) edge [out=324,in=264] (2); 
\node [below left] at (2label) {$\alpha_2\beta_3$};
\end{tikzpicture} &
(iv)\begin{tikzpicture}[vertex/.style={circle,draw,fill=black,minimum size = 1.5mm,inner sep=0pt},thick]
\foreach \x/\p in {0/above,1/{above left},2/{below left},3/{below right},4/{above right}}
{\node (\x) at (\x*72+90:\radius) [vertex] {};
 \node [\p] at (\x) {$\x$};}
\foreach \x in {0,1,...,4}
\node (mid\x) at (\x*72+54:\radius) {};
\foreach \x in {0,1,...,4} 
\draw[<-] (mid\x) arc (\x*72+54:(\x+1)*72+53:\radius);
\foreach \x/\r in {0/1.2,1/1.35,2/1.2,3/1.35,4/1.25}
   \node at (\x*72+126:\r*\radius) {$(\alpha_\x)^{-1}$};
\end{tikzpicture}
\end{tabular}
\caption{Illustrating the group operations in a group of the form
$G=H\wr\,{\dihedral52}=H\wr D_5$. Diagrams representing elements (i) $\alpha$ with $\hat\alpha=\rho_1$; (ii) $\beta$ with $\hat\beta=\sigma_0$; (iii) $\alpha\beta$; and (iv) $\alpha^{-1}$. }
\label{fig:illustration}
\end{center}
\end{figure}
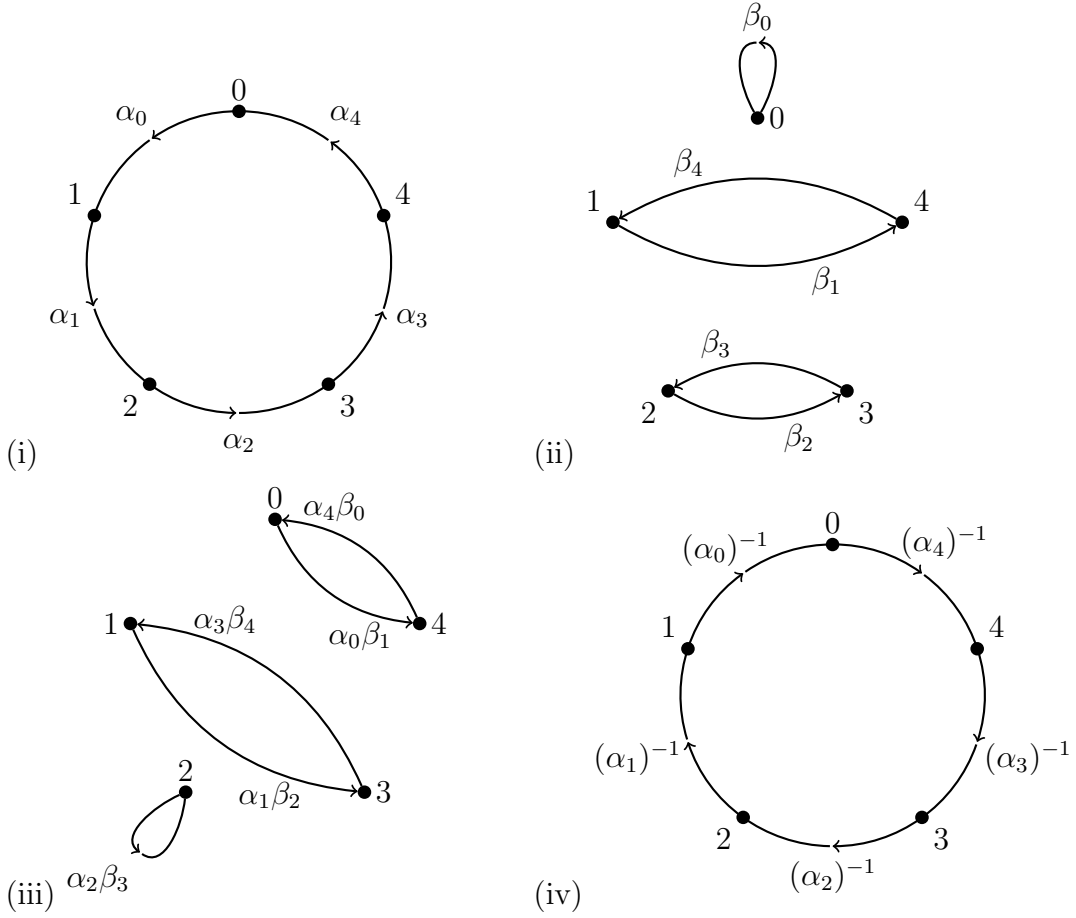

\subsection{The cycle product and applications}
\label{sec:cycleprod}

As in Tuffley~\cite{tuffley-2007}, the main tools we require to work with our target groups \Wqrst\ are the \emph{cycle product}, and the notion of an element of \Wqrst in \emph{reduced standard form}. These are useful in understanding conjugacy classes, centralisers and $m$th powers in \Wqrst. Adapting the definitions from $\Hpqr=\olddihedral{q}r\wr PSL(2,p)$ to $\Wqrst=\dihedral{q}r\wr\dihedral{s}t$ we define these as follows:

\begin{definition}[Tuffley~{\cite[Sec.~3.4]{tuffley-2007}}]
Given $\alpha\in\Wqrst$ and $g \in \Cst$, we let $\ell_g(\hat {\alpha})$ be the length of the disjoint cycle of the action of $\hat{\alpha}$ on \Cst\ that contains $g$. Then the \textbf{cycle product of $\alpha$ at $g$} is the product
\[
\pi_g(\alpha) = \prod_{r=0}^{\ell_g(\hat\alpha)-1} \alpha_{g \cdot \hat\alpha^r} = \alpha_g \alpha_{g \cdot \hat\alpha} \alpha_{g \cdot \hat\alpha^2}\cdots \alpha_{g \cdot \hat\alpha^{\ell_g (\hat\alpha)-1}}.
\]
\end{definition}
The cycle product is thus the ordered product, beginning at $g$, of $\alpha_h$ for $h$ in the disjoint cycle of $\hat{\alpha}$ containing $g$. Observe that the value of the cycle product on a given cycle is dependent on the beginning point $g$, while the conjugacy class is not, because $\pi_{h \cdot \hat{\alpha}}(\alpha) = \alpha_h^{-1} \pi_h(\alpha)\alpha_h$.

\begin{definition}[See~{\cite[Sec.~3.4]{tuffley-2007}}]
\label{def:rsf}
Let $\gamma$ belong to $\Wqrst$. Then $\gamma$ is in \textbf{reduced standard form} if 
\[
\gamma_{g \cdot \hat{\gamma}} = \gamma_g
\]
for each $g\in\Cst$. 
In other words, $\gamma$ is in reduced standard form if $\gamma_g$ is constant on orbits of $\hat\gamma$. 
\end{definition}

\begin{remark}
In~\cite{tuffley-2007}, an element satisfying the condition of Definition~\ref{def:rsf} is only said to be in \emph{standard form}, and to be in reduced standard form we further require that
$\pi_g (\gamma) = \gamma_g^{\ell_g(\hat{\gamma})} = 1$ if and only if $\gamma_g = 1$. We observe that these two notions co-incide in \Wqrst, because the orders of elements of \dihedral{q}r\ are coprime to the orders of elements of \dihedral{s}t. If 
$\pi_g (\gamma) = \gamma_g^{\ell_g(\hat{\gamma})} = 1$ then $\ell_g(\hat{\gamma})$ must divide the order of $\gamma_g$. But $\gamma_g\in\dihedral{q}r$, so the order of $\gamma_h$ is 1, $q$ or $r$; and $\ell_g(\hat{\gamma})$ is the length of a cycle of $\hat\gamma\in\dihedral{s}t$ acting on $\Cst$, so is 1, $s$ or $t$. By construction $q,r,s,t$ are distinct primes, so if  $\pi_g (\gamma) = 1$ then $\gamma_g=1$ also.
\end{remark}

We now state as lemmas several applications of the cycle product to conjugacy classes, centralisers and $m$th powers in \Wqrst. Where proofs are not given see~\cite[Sec.~3.4]{tuffley-2007}. Although the proofs given there are for 
$\Hpqr=\olddihedral{q}r\wr PSL(2,p)$, they in fact apply to any group of the form $\olddihedral{q}r\wr G=(\olddihedral{q}r)^X\rtimes G$ or $\dihedral{q}r\wr G=(\dihedral{q}r)^X\rtimes G$, where $G$ is a group and $X$ is a right $G$-set. This is because the structure of $PSL(2,p)$ plays no role in the arguments: the arguments are all carried out on the level of cycles and orbits of an element $k\in G$ acting on $X$, making use of the properties of $\olddihedral{q}r$ given by~\cite[Lemma~3.2]{tuffley-2007}, or by Lemma~\ref{lem:dihedralprops} for $\dihedral{q}r$. 

Our first result is that every element of \Wqrst\ may be conjugated to reduced standard form:
\begin{lemma}
\label{lem:rsf}
Let $\alpha\in\Wqrst$.
Then $\alpha$ is conjugate to an element $\gamma$ in reduced standard form such that $\hat{\gamma} = \hat{\alpha}$ and $\pi_g(\gamma)$ is conjugate to $\pi_g(\alpha)$ for all $g\in\Cst$.
\end{lemma}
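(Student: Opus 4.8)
The plan is to reduce $\alpha$ to reduced standard form one orbit of $\hat\alpha$ at a time, by conjugating by elements supported on a single orbit. Since conjugation by a fixed $\beta$ with $\hat\beta = 1$ does not change $\hat{(\cdot)}$, the resulting $\gamma$ will automatically satisfy $\hat\gamma = \hat\alpha$, so the cycle structure of $\hat\gamma$ acting on $\Cst$ is identical to that of $\hat\alpha$. The conjugacy-invariance of the cycle product up to conjugacy in $\dihedral{q}r$ (namely $\pi_{h\cdot\hat\alpha}(\alpha) = \alpha_h^{-1}\pi_h(\alpha)\alpha_h$, and more generally conjugating $\alpha$ by $\beta$ with $\hat\beta=1$ replaces $\pi_g(\alpha)$ by $\beta_g^{-1}\pi_g(\alpha)\beta_g$) then gives the claim about the cycle products for free, so the only real content is to arrange the defining condition $\gamma_{g\cdot\hat\gamma}=\gamma_g$.

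First I would fix one disjoint cycle $C = (g_0, g_1, \dots, g_{\ell-1})$ of $\hat\alpha$ acting on $\Cst$, where $g_{i} = g_0\cdot\hat\alpha^{i}$ and $\ell = \ell_{g_0}(\hat\alpha)$. I want to find $\beta$ supported on $C$ (i.e. $\hat\beta = 1$ and $\beta_g = 1$ for $g\notin C$) such that the conjugate $\gamma = \beta\alpha\beta^{-1}$ has $\gamma_{g_i}$ independent of $i$ on this cycle. Computing with the group operations in the definition of \Wqrst, one finds $\gamma_{g_i} = \beta_{g_i}\,\alpha_{g_i}\,\beta_{g_{i+1}}^{-1}$ (indices mod $\ell$). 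Setting $\beta_{g_0} = 1$ and then defining $\beta_{g_{i+1}} = \beta_{g_i}\alpha_{g_i}\cdots$ recursively — more precisely choosing $\beta_{g_{i}}$ so that the partial products telescope — makes $\gamma_{g_i} = 1$ for $i = 0,\dots,\ell-2$, and the single leftover entry is $\gamma_{g_{\ell-1}} = \beta_{g_{\ell-1}}\alpha_{g_{\ell-1}}\beta_{g_0}^{-1} = \pi_{g_0}(\alpha)$ up to the conjugation bookkeeping. This concentrates the cycle product at one vertex but is not yet constant on the cycle.

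The remaining step is to spread this single nontrivial value evenly around the cycle so that all $\gamma_{g_i}$ are equal. Here I would use the hypothesis, guaranteed by Lemma~\ref{lem:dihedralprops}, that the concentrated value $k = \pi_{g_0}(\alpha)\in\dihedral{q}r$ has order $1$, $q$, or $r$ — in any case an order coprime to $\ell\in\{1,s,t\}$, since $q,r,s,t$ are distinct primes. Hence $\ell$ is invertible modulo the order of $k$, so $k$ has a (unique) $\ell$th root $k_0 = k^{m}$ in $\langle k\rangle$ with $k_0^{\ell} = k$, by Remark~\ref{rem:nthroot}. Conjugating once more by an appropriate $\beta'$ supported on $C$ — one whose effect replaces the entries $(1,1,\dots,1,k)$ around the cycle by $(k_0,k_0,\dots,k_0)$, using that such a rearrangement is exactly a telescoping conjugation since $k_0$ commutes with $k$ — yields $\gamma_{g_i} = k_0$ for all $i$, and the cycle product at $g_0$ is $k_0^{\ell} = k$, conjugate (indeed equal) to $\pi_{g_0}(\alpha)$. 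Doing this independently on every disjoint cycle of $\hat\alpha$, and assembling the $\beta$'s (which have disjoint supports and hence commute), produces the desired $\gamma$.

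The main obstacle is the second spreading step: one must check that the passage from $(1,\dots,1,k)$ to $(k_0,\dots,k_0)$ is genuinely realised by conjugation within the orbit, which requires both the existence of the $\ell$th root $k_0$ (the coprimality of primes, via Remark~\ref{rem:nthroot}) and the commutativity $k_0 k = k k_0$ so that the relevant products telescope cleanly; the first spreading step and the assembly over disjoint cycles are routine bookkeeping with the wreath product formulas. I would also note explicitly that when $\ell = 1$ (a fixed point of $\hat\alpha$) there is nothing to do, and that the order-coprimality argument is precisely the point noted in the Remark following Definition~\ref{def:rsf}, reconciling "standard form" with "reduced standard form" in \Wqrst.
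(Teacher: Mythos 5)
Your proof is correct and is essentially the paper's argument written out in full: the paper simply invokes the telescoping-and-root-extraction argument of Lemma~3.3 of \cite{tuffley-2007} and verifies that the required coprimality between $\ell_g(\hat\alpha)\in\{1,s,t\}$ and $\ord(\pi_g(\alpha))\in\{1,q,r\}$ always holds, which is exactly the hinge of your two-stage conjugation (concentrate the cycle product at one vertex, then spread an $\ell$th root around the cycle). Nothing further is needed.
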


\begin{proof}
Applying the argument of~\cite[Lemma 3.3]{tuffley-2007}, 
$\alpha$ is conjugate to such $\gamma$ as required
if $\ell_g(\hat{\alpha})$ is coprime to the order of $\pi_g(\alpha)$ for all $g\in\Cst$.  But $\ell_g(\hat{\alpha}) \in \{1,s,t\}$ and $\ord(\pi_g(\alpha)) \in \{1,q,r\}$ for all $g\in\Cst$, and by construction $q,r,s,t$ are distinct primes, so the coprimality condition is always satisfied. Thus, the conclusion of the lemma holds for all $\alpha\in\Wqrst$. 
\end{proof}

We now give some sufficient conditions for an element of \Wqrst\ to be conjugate to an element of \Arst\ in reduced standard form:

\begin{lemma}[See {\cite[Lemma 3.4]{tuffley-2007}}]
\label{lem:inArst}
Let $\alpha$ be an element of $\Wqrst$ in reduced standard form such that $\alpha_g$ has order 1 or $r$ for all $g\in\Cst$. Then $\alpha$ is conjugate to an element $\gamma$ of $\Arst$ in reduced standard form such that $\hat\gamma=\hat\alpha$ and $\gamma_g$ is conjugate to $\alpha_g$ for all $g$. 
\end{lemma}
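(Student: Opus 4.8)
The plan is to reduce the statement to a problem purely about the coordinate group $\dihedral{q}r$ and the cycle structure of $\hat\alpha$ acting on $\Cst$, and then exploit the fact that every element of order $r$ in $\dihedral{q}r$ is conjugate to a power of the fixed generator $\xi$. First I would recall that $\Arst = \langle\xi\rangle\wr\dihedral{s}t$ sits inside $\Wqrst$ as the subgroup whose coordinate entries all lie in the cyclic group $\langle\xi\rangle$ of order $r$. So what must be shown is that, after conjugating $\alpha$ by a suitable element of $(\dihedral{q}r)^\Cst$ (i.e.\ one with trivial $\dihedral{s}t$-part, so that $\hat\alpha$ is preserved), every coordinate $\gamma_g$ can be brought into $\langle\xi\rangle$ while remaining constant on $\hat\gamma$-orbits. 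Because $\alpha$ is already in reduced standard form, $\alpha_g$ is constant on each orbit of $\hat\alpha$, and the cycle product on an orbit of length $\ell$ through $g$ is just $\alpha_g^{\ell}$.

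The key step, following the argument of~\cite[Lemma 3.4]{tuffley-2007}, is to handle one $\hat\alpha$-orbit at a time. Fix an orbit $\mathcal{O}$ of length $\ell\in\{1,s,t\}$ through $g$, with common coordinate value $h=\alpha_g\in\dihedral{q}r$ of order $1$ or $r$. If $h=1$ there is nothing to do on that orbit. If $h$ has order $r$, then by Lemma~\ref{lem:dihedralprops}\eqref{item.dihedralprops.conjugacy} (or directly from the structure of $\dihedral{q}r$) $h$ is conjugate in $\dihedral{q}r$ to a power of $\xi$: indeed $[h]=[\xi^k]$ for a unique $k$ coprime to $r$, and any two elements of order $r$ with the same image in $\integer_r$ are conjugate, so there is $c\in\dihedral{q}r$ with $c^{-1}hc=\xi^k\in\langle\xi\rangle$. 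I would then define a conjugating element $\delta\in(\dihedral{q}r)^\Cst$ with $\hat\delta=1$ by setting $\delta_g = c$ and propagating this choice around the orbit so that reduced standard form is preserved — concretely, one sets $\delta_{g\cdot\hat\alpha^{j}}=c$ for all $j$, and checks using the multiplication rule $(\delta^{-1}\alpha\delta)_x = \delta_x^{-1}\alpha_x\delta_{x\cdot\hat\alpha}$ that the new coordinate on the orbit is $c^{-1}hc=\xi^k$, still constant on the orbit. Since the coordinate $\gamma_g=\xi^k$ is conjugate (in $\dihedral{q}r$) to $\alpha_g=h$, the required conclusion $\gamma_g\sim\alpha_g$ holds, and $\hat\gamma=\hat\alpha$ since $\hat\delta=1$.

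Doing this simultaneously and independently on every orbit (the orbits partition $\Cst$, so the various $\delta$'s on different orbits commute and can be combined into a single $\delta\in(\dihedral{q}r)^\Cst$) yields $\gamma=\delta^{-1}\alpha\delta\in\Arst$ in reduced standard form with $\hat\gamma=\hat\alpha$. The main obstacle — really the only point requiring care — is verifying that the propagation of the conjugating element around each orbit genuinely preserves reduced standard form and produces a constant coordinate; this is the place where one must use that $\ell$ divides neither $q$ nor $r$, so that no interaction between the cycle length and the order of $h$ spoils the argument, exactly as in the coprimality observation underlying Lemma~\ref{lem:rsf}. Everything else is a direct transcription of~\cite[Lemma 3.4]{tuffley-2007}, whose proof, as noted in the paragraph preceding the lemma, uses only the properties of $\dihedral{q}r$ recorded in Lemma~\ref{lem:dihedralprops} together with cycle-and-orbit bookkeeping, and not any special feature of $PSL(2,p)$.
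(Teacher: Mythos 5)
Your proof is correct and is essentially the argument of~\cite[Lemma 3.4]{tuffley-2007}, to which the paper defers: conjugate by an element $\delta$ with $\hat\delta=1$ chosen constant on each $\hat\alpha$-orbit, using Lemma~\ref{lem:dihedralprops}\eqref{item.dihedralprops.conjugacy} to move each order-$r$ coordinate $\alpha_g$ to the element $\xi^{[\alpha_g]}$ of $\langle\xi\rangle$, so that $(\delta^{-1}\alpha\delta)_g=\delta_g^{-1}\alpha_g\delta_{g\cdot\hat\alpha}=\delta_g^{-1}\alpha_g\delta_g$ is constant on orbits. The one small inaccuracy is your closing remark: the coprimality of the cycle lengths with $q$ and $r$ is not actually needed in this step (it is what drives Lemma~\ref{lem:rsf} and the ``reduced'' part of reduced standard form), since once $\delta$ is constant on $\hat\alpha$-orbits the displayed computation already preserves constancy on orbits with no interaction between $\ell$ and the order of $\alpha_g$.
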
 

\begin{corollary}
\label{cor:inArst}
Let $\alpha$ be an element of $\Wqrst$ such that $\pi_g(\alpha)$ has order 1 or $r$ for all $g\in\Cst$. Then $\alpha$ is conjugate to an element $\gamma$ of $\Arst$ in reduced standard form such that $\hat\gamma=\hat\alpha$. 
\end{corollary}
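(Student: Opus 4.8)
The plan is to deduce Corollary~\ref{cor:inArst} from Lemma~\ref{lem:rsf} and Lemma~\ref{lem:inArst} by a straightforward two-step conjugation. First I would apply Lemma~\ref{lem:rsf} to the given $\alpha$: this produces an element $\beta$ conjugate to $\alpha$ that is in reduced standard form, with $\hat\beta=\hat\alpha$ and $\pi_g(\beta)$ conjugate to $\pi_g(\alpha)$ for every $g\in\Cst$. Since conjugate elements have the same order, the hypothesis that $\pi_g(\alpha)$ has order $1$ or $r$ for all $g$ transfers to $\pi_g(\beta)$. Because $\beta$ is in reduced standard form, $\beta_g$ is constant on each orbit of $\hat\beta$, so $\pi_g(\beta)=\beta_g^{\ell_g(\hat\beta)}$; as $\beta_g\in\dihedral{q}r$ has order $1$, $q$ or $r$, and $\ell_g(\hat\beta)\in\{1,s,t\}$ is coprime to $q$ and $r$, the power $\beta_g^{\ell_g(\hat\beta)}$ has the same order as $\beta_g$ itself. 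Hence $\beta_g$ has order $1$ or $r$ for all $g\in\Cst$.

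Now $\beta$ satisfies exactly the hypotheses of Lemma~\ref{lem:inArst}, so $\beta$ is conjugate to an element $\gamma\in\Arst$ in reduced standard form with $\hat\gamma=\hat\beta=\hat\alpha$. Since conjugacy is transitive, $\gamma$ is conjugate to $\alpha$, and $\gamma$ has all the required properties. This completes the argument.

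There is no real obstacle here; the only point requiring a moment's care is the observation — already made in the remark following Definition~\ref{def:rsf} — that for an element in reduced standard form the condition on the cycle products $\pi_g$ is equivalent to the corresponding condition on the coordinates $\beta_g$, which relies on the primes $q,r,s,t$ being distinct so that raising an element of $\dihedral{q}r$ to a power equal to a cycle length in $\dihedral{s}t$ does not change its order. Once that translation is in hand, Corollary~\ref{cor:inArst} is an immediate composition of the two preceding lemmas.
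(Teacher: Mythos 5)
Your argument is correct and follows the paper's proof essentially verbatim: apply Lemma~\ref{lem:rsf} to pass to a conjugate $\beta$ in reduced standard form, use $\pi_g(\beta)=\beta_g^{\ell_g(\hat\beta)}$ together with the coprimality of $\{q,r\}$ and $\{1,s,t\}$ to conclude $\beta_g$ has order $1$ or $r$, and then invoke Lemma~\ref{lem:inArst}. No gaps.
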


\begin{proof}
By Lemma~\ref{lem:rsf}, $\alpha$ is conjugate to an element $\beta$ in reduced standard form such that $\hat\beta=\hat\alpha$ and $\pi_g(\beta)$ is conjugate to $\pi_g(\alpha)$ for all $g\in\Cst$. Thus, $\pi_g(\beta)$ has order $1$ or $r$ for all $g\in\Cst$. Because $\beta$ is in reduced standard form we have
\[
\pi_g(\beta) = \beta_g^{\ell_g(\hat\beta)}, 
\]
where $\ell_g(\hat\beta)\in\{1,s,t\}$. Since $\ord{\beta_g}\in\{1,q,r\}$, and $q$ and $r$ are prime and coprime to $st$, it must be the case that $\ord(\beta_g)=\ord(\pi_g(\beta))$, and so $\beta_g$ has order $1$ or $r$ for all $g\in\Cst$. Then by Lemma~\ref{lem:inArst} $\beta$, and hence also $\alpha$, is conjugate to an element $\gamma$ of \Arst\ in reduced standard form such that $\hat\gamma=\hat\beta=\hat\alpha$. 
\end{proof}

Next, we consider certain elements of the centraliser of an element in reduced standard form.
\begin{lemma}[See {\cite[Lemma 3.5]{tuffley-2007}}]
\label{lem:rsf-centraliser}
Let $\alpha\in\Wqrst$ be an element of \Wqrst\  in reduced standard form, and suppose that $\beta$ commutes with $\alpha$. If $\alpha_g$ is constant on orbits of $\hat{\beta}$, then $\beta_g$ commutes with $\alpha_g$ for all $g\in\Cst$, and $\beta_g$ is constant on orbits of $\hat{\alpha}$.
\end{lemma}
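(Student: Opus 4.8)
The plan is to work cycle-by-cycle, exploiting the multiplication rule $(\alpha\beta)_g = \alpha_g\beta_{g\cdot\hat\alpha}$ together with the hypotheses that $\alpha$ is in reduced standard form and that $\alpha_g$ is constant on orbits of $\hat\beta$. First I would write out the equation $\alpha\beta = \beta\alpha$ in coordinates: for each $g\in\Cst$ we have $\alpha_g\beta_{g\cdot\hat\alpha} = \beta_g\alpha_{g\cdot\hat\beta}$, together with $\hat\alpha\hat\beta = \hat\beta\hat\alpha$, so $\hat\alpha$ and $\hat\beta$ commute as elements of $\Dst$ acting on $\Cst$. The commuting of $\hat\alpha$ and $\hat\beta$ means $\hat\beta$ permutes the cycles of $\hat\alpha$ among themselves (and vice versa), which is the structural fact that makes the argument go through.

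Next I would use the two hypotheses to simplify the coordinate equation. Since $\alpha$ is in reduced standard form, $\alpha_g = \alpha_{g\cdot\hat\alpha}$, i.e.\ $\alpha_g$ depends only on the $\hat\alpha$-orbit of $g$; and by hypothesis $\alpha_g = \alpha_{g\cdot\hat\beta}$, so $\alpha_g$ depends only on the orbit of $g$ under $\langle\hat\alpha,\hat\beta\rangle$. In particular $\alpha_{g\cdot\hat\beta} = \alpha_g$, so the coordinate equation collapses to
\[
\alpha_g\beta_{g\cdot\hat\alpha} = \beta_g\alpha_g
\]
for every $g\in\Cst$. This immediately gives $\beta_{g\cdot\hat\alpha} = \alpha_g^{-1}\beta_g\alpha_g$, so traversing the $\hat\alpha$-cycle through $g$ conjugates $\beta_g$ by successive $\alpha$-coordinates; after going all the way around a cycle of length $\ell = \ell_g(\hat\alpha)$ and using reduced standard form ($\alpha$ constant $= \alpha_g$ on that cycle) we get $\beta_g = \alpha_g^{-\ell}\beta_g\alpha_g^{\ell} = \pi_g(\alpha)^{-1}\beta_g\pi_g(\alpha)$, so $\beta_g$ commutes with $\pi_g(\alpha) = \alpha_g^{\ell}$. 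Here I would invoke the structure of $\dihedral{q}r$: since $q,r,s,t$ are distinct primes, $\ell\in\{1,s,t\}$ is coprime to $\ord(\alpha_g)\in\{1,q,r\}$, so $\langle\alpha_g^{\ell}\rangle = \langle\alpha_g\rangle$, and hence $\beta_g$ commutes with $\alpha_g$ itself. That proves the first conclusion.

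For the second conclusion — that $\beta_g$ is constant on orbits of $\hat\alpha$ — I would revisit the recursion $\beta_{g\cdot\hat\alpha} = \alpha_g^{-1}\beta_g\alpha_g$ now knowing that $\beta_g$ commutes with $\alpha_g$: the right-hand side is just $\beta_g$, so $\beta_{g\cdot\hat\alpha} = \beta_g$ for all $g$, which is precisely the statement that $\beta_g$ is constant on $\hat\alpha$-orbits. The main obstacle I anticipate is purely bookkeeping: correctly propagating the conjugation around a full $\hat\alpha$-cycle and making sure that "$\alpha_g$ constant on the cycle" is being used at the right moment to identify the accumulated conjugator with $\pi_g(\alpha)$ rather than some unreduced product; once that is set up cleanly, the coprimality observation from Lemma~\ref{lem:dihedralprops} does the rest. (One should also note the degenerate case $\ell=1$, where $g$ is a fixed point of $\hat\alpha$ and the statement $\beta_g$ commutes with $\alpha_g$ is the coordinate equation itself, with nothing to traverse.)
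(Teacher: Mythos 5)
Your argument is correct and is essentially the proof the paper relies on: the paper does not reprove this lemma but defers to \cite[Lemma 3.5]{tuffley-2007}, whose argument is exactly this coordinate computation --- collapse $\alpha_g\beta_{g\cdot\hat\alpha}=\beta_g\alpha_{g\cdot\hat\beta}$ using the two constancy hypotheses, iterate $\beta_{g\cdot\hat\alpha}=\alpha_g^{-1}\beta_g\alpha_g$ around a cycle to get commutation with $\pi_g(\alpha)=\alpha_g^{\ell}$, and use $\gcd(\ell,\ord(\alpha_g))=1$ to descend to $\alpha_g$ itself, whence the recursion gives constancy of $\beta_g$ on $\hat\alpha$-orbits. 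No gaps.
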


In particular, the condition that  $\alpha_g$ is constant on orbits of $\hat{\beta}$ holds if $\hat\beta\in\langle\hat\alpha\rangle$. Finally, we give a necessary condition for an element $\alpha$ of $\Wqrst$ to be an $m$th power:
\begin{lemma}[See {\cite[Lemma 3.6]{tuffley-2007}}]
\label{lem:mthpower}
Suppose that $\alpha = \gamma^m$ in $\Wqrst$. Then $\hat{\gamma}^m = \hat{\alpha}$ and 
\[
\pi_g(\alpha) = (\pi_g(\gamma))^{m/\gcd(\ell_g(\hat{\gamma}),m)}.
\]
In particular, the conjugacy class of $\pi_g(\alpha)$ is constant on orbits of $\hat{\gamma}$; and if $\gcd(st,m)=1$ then $\pi_g(\alpha) = (\pi_g(\gamma))^{m}$. 
\end{lemma}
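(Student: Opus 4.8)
The plan is to reduce everything to a bookkeeping computation along the single $\hat\gamma$-cycle through $g$, using only the wreath-product multiplication formulas and the properties of $\dihedral{q}r$ and $\dihedral{s}t$ recorded above. First, from $\widehat{\alpha\beta}=\hat\alpha\hat\beta$ applied $m$ times we get $\hat\alpha=\widehat{\gamma^m}=\hat\gamma^m$ at once. Next I record the elementary fact about powers of permutations: if $\ell=\ell_g(\hat\gamma)$ is the length of the $\hat\gamma$-cycle through $g$, then the $\hat\gamma^m$-cycle through $g$ has length $\ell_g(\hat\alpha)=\ell/\gcd(\ell,m)$, and its points are $g,\,g\cdot\hat\gamma^m,\,g\cdot\hat\gamma^{2m},\dots$.

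The core step is to expand the cycle product. Iterating $(\alpha\beta)_h=\alpha_h\beta_{h\cdot\hat\alpha}$ gives, by induction on $m$, the formula $(\gamma^m)_h=\prod_{i=0}^{m-1}\gamma_{h\cdot\hat\gamma^{\,i}}$. Substituting $h=g\cdot\hat\gamma^{mj}$ for $0\le j<\ell/\gcd(\ell,m)$ and concatenating,
\[
\pi_g(\alpha)=\prod_{j=0}^{\ell/\gcd(\ell,m)-1}(\gamma^m)_{g\cdot\hat\gamma^{mj}}=\prod_{j=0}^{\ell/\gcd(\ell,m)-1}\ \prod_{i=0}^{m-1}\gamma_{g\cdot\hat\gamma^{mj+i}}=\prod_{k=0}^{\operatorname{lcm}(\ell,m)-1}\gamma_{g\cdot\hat\gamma^{\,k}},
\]
since the pairs $(j,i)$ enumerate the exponents $k=0,1,\dots,\operatorname{lcm}(\ell,m)-1$ once each, in increasing order. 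Because $k\mapsto\gamma_{g\cdot\hat\gamma^{\,k}}$ has period $\ell$ (as $g\cdot\hat\gamma^{\ell}=g$) and $\operatorname{lcm}(\ell,m)=\ell\cdot\bigl(m/\gcd(\ell,m)\bigr)$, the last product splits into $m/\gcd(\ell,m)$ consecutive length-$\ell$ blocks, each equal to $\pi_g(\gamma)$; associativity then yields $\pi_g(\alpha)=(\pi_g(\gamma))^{m/\gcd(\ell_g(\hat\gamma),m)}$.

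For the remaining assertions: the conjugacy class of $\pi_g(\gamma)$ is constant on orbits of $\hat\gamma$, by the identity $\pi_{g\cdot\hat\gamma}(\gamma)=\gamma_g^{-1}\pi_g(\gamma)\gamma_g$ noted after the definition of the cycle product; moreover $\ell_g(\hat\gamma)$, hence the exponent $m/\gcd(\ell_g(\hat\gamma),m)$, is constant on each such orbit, so the conjugacy class of $\pi_g(\alpha)=(\pi_g(\gamma))^{m/\gcd(\ell_g(\hat\gamma),m)}$ is too. Finally, if $\gcd(st,m)=1$ then since $\ell_g(\hat\gamma)\in\{1,s,t\}$ by Remark~\ref{rem:cyclestructure} we get $\gcd(\ell_g(\hat\gamma),m)=1$, so the exponent is $m$ and $\pi_g(\alpha)=(\pi_g(\gamma))^m$.

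I do not anticipate a genuine obstacle; the only point requiring care is that $\dihedral{q}r$ is non-abelian, so the concatenation of the $m$-fold products and the subsequent splitting into blocks must be performed in the correct left-to-right order, and the periodicity argument invoked only up to that ordering — associativity makes this routine but it should be written out explicitly rather than glossed over.
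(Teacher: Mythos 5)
Your proof is correct, and it is essentially the argument the paper relies on: the paper gives no proof of this lemma but defers to \cite[Lemma 3.6]{tuffley-2007}, whose proof is exactly this expansion of $(\gamma^m)_h$ along the $\hat\gamma$-cycle through $g$, the re-indexing of the double product as $\operatorname{lcm}(\ell,m)$ consecutive terms, and the splitting into $m/\gcd(\ell,m)$ period-$\ell$ blocks. Your handling of the ordering in the non-abelian factor and of the final two assertions (via $\pi_{g\cdot\hat\gamma}(\gamma)=\gamma_g^{-1}\pi_g(\gamma)\gamma_g$ and $\ell_g(\hat\gamma)\in\{1,s,t\}$) is accurate.
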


\section{Homomorphisms from $G_{a,b}$ to \Wqrst}
\label{sec:images}

\subsection{Introduction}
We will prove Theorem~\ref{thm:main} in the following form:

\begin{theorem}
\label{thm:main-refined}
Let $a,b,n\geq2$ be positive integers such that $\gcd(a,b)=1$. Suppose that there are prime numbers  $s|a$ and $t|b$ such that $\gcd(st,n)=1$. Choose a prime $q$ dividing $n$, and a prime $r$ coprime to $2nab$. Then $H=\Wqrst$ satisfies the conclusion of Theorem~\ref{thm:main}; that is, 
\[
|Hom(G_n(\GK{a}{b}),\Wqrst)| < |Hom(G_n(\SK{a}{b}),\Wqrst)|.
\] 
\end{theorem}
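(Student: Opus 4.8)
The plan is to count homomorphisms $G_n(\GK ab)\to\Wqrst$ and $G_n(\SK ab)\to\Wqrst$ by first counting homomorphisms of the common underlying group $H_{a,b}$, and then, for each such homomorphism, counting the ways to extend it by choosing an image $N$ of the extra generator $\nu$. Recall from Section~\ref{sec:GK-SK} that a homomorphism $\phi:H_{a,b}\to\Wqrst$ is a choice of images $X=\phi(x),Y=\phi(y),W=\phi(w),Z=\phi(z)$ with $X^a=Y^b$, $W^a=Z^b$, and $Y^dX^{-c}=Z^dW^{-c}=:M$ (the common meridian image). An extension to $G_n$ of either knot group requires an $N$ with $N^n=M$ and $N$ commuting with the appropriate longitude image: $X^aW^a$ for $\GK ab$, $X^aW^{-a}$ for $\SK ab$. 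So the count is
\[
|Hom(G_n(\GK ab),\Wqrst)| = \sum_{\phi}\#\{N: N^n=\phi(M),\ [N,X^aW^a]=1\},
\]
and similarly for $\SK ab$ with $X^aW^{-a}$. Since the two sums range over exactly the same set of $\phi$'s, it suffices to compare the inner counts $\phi$-by-$\phi$ and show (i) for every $\phi$, the $\GK ab$-count is $\leq$ the $\SK ab$-count, and (ii) for at least one $\phi$, the inequality is strict.

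The first main step is to partition the homomorphisms $\phi$ according to whether the two torus-knot solution pairs $(X,Y)$ and $(W,Z)$ to $v^a=v'^b$ are cyclic or non-cyclic. By the results of Section~\ref{sec:images} (the lemmas on images of meridian and longitude, which I am free to invoke) one expects the following dichotomy. When both $(X,Y)$ and $(W,Z)$ are cyclic, $M$, $X^a$ and $W^a$ all lie in a common abelian (indeed cyclic, by Lemma~\ref{lem:dihedralprops}-type reasoning inside $\Wqrst$) part, so $X^aW^a$ and $X^aW^{-a}$ have the same centraliser, hence the same number of $n$th roots of $M$ commuting with each — the two knots contribute equally. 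The interesting case is when at least one pair, say $(W,Z)$, is non-cyclic: then by Remark~\ref{rem:non-cyclicgenerates}-type behaviour the image is large, $W^a=1$ (by Lemma~\ref{lem:non-cyclicsolutions}, since $s|a$, $t|b$, $\gcd(st,n)=1$), so $X^aW^{\pm a}=X^a$, the longitude images of $\GK ab$ and $\SK ab$ coincide, and again the counts agree. So the strict inequality must come from the genuinely mixed situation, where exactly one of the pairs — forced by the wreath structure $\dihedral qr\wr\dihedral st$ — produces a non-cyclic $(s,t)$-type solution in a $\dihedral qr$-slot versus a cyclic one, making $X^aW^a$ and $X^aW^{-a}$ differ in their number of commuting $n$th roots of $M$.

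The second main step, and the technical heart, is the local analysis inside $\Wqrst$ of the number of $n$th roots of a given element that commute with a given longitude image. Here I would put $\alpha=X^aW^a$ or $X^aW^{-a}$ into reduced standard form (Lemma~\ref{lem:rsf}), use Corollary~\ref{cor:inArst} to push things into $\Arst=\langle\xi\rangle\wr\dihedral st$ where computations reduce to the abelian base $(\integer_r)^{\Cst}$ and the $\dihedral st$-action on $\Cst$, and use Lemma~\ref{lem:mthpower} to control $n$th roots via cycle products. Since $\gcd(st,n)=1$, Lemma~\ref{lem:mthpower} gives $\pi_g(\nu^n)=\pi_g(\nu)^n$ cycle-by-cycle, and Remark~\ref{rem:nthroot} makes $n$th roots in $\dihedral qr$ unique when they exist; this should reduce counting $N$ with $N^n=M$ and $[N,\alpha]=1$ to a product, over $\hat\alpha$-orbits in $\Cst$, of local factors determined by $q$, $r$ and the cycle lengths, and crucially depending on whether the relevant cycle product of the longitude is trivial or of order $r$. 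The sign flip $W^a\mapsto W^{-a}$ changes one such cycle product from trivial (order $1$) to nontrivial (order $r$) or vice versa, changing that local factor — and one needs to check the direction of the change goes the right way, namely that the square-knot side always has at least as many and strictly more for the exhibited $\phi$.

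The step I expect to be the main obstacle is pinning down exactly which homomorphisms $\phi$ realise the strict gap and verifying the inequality is in the correct direction there: this requires the detailed bookkeeping of Section~\ref{sec:images} on how $\mu$, $\lambda$ and products of longitudes of the two torus-knot factors sit inside $\Wqrst$, and a careful count (for at least one explicit $\phi$ sending one factor to a cyclic solution and the other to a non-cyclic solution landing nontrivially in the $\dihedral qr$-slots) that $X^aW^{-a}$ has strictly more $n$th roots of $M$ in its centraliser than $X^aW^{a}$ does — essentially the $(a,b)$-analogue of the parity/order-$r$ argument Tuffley uses for the original square and granny knots.
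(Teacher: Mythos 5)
Your overall framework---counting map-root pairs $(\rho,\eta)$ for the common group $H_{a,b}$ and comparing, $\rho$ by $\rho$, how many $n$th roots of the meridian image commute with each longitude image---is exactly the paper's strategy, and your use of reduced standard form, $\Arst$, cycle products and Lemma~\ref{lem:mthpower} is the right toolkit. However, there is a genuine error in your case analysis that inverts where the strict inequality comes from. You claim that if $(W,Z)$ is a non-cyclic solution then $W^a=1$ by Lemma~\ref{lem:non-cyclicsolutions}, so the two longitude images coincide. That lemma lives in $\Dqr$ (equivalently $\Dst$), so applied to the induced map it only gives $\widehat{W^a}=1$ in $\Dst$; the element $W^a=\delta$ itself is a generally \emph{nontrivial} element of the base $(\Dqr)^{\Cst}$, with $[\delta_g]=ab[[\alpha]]/s^{\ord_t(s)}$ by Lemma~\ref{lem:longitude}\eqref{item:eps.modr}. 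Consequently the doubly non-cyclic case is not a case of equality: it is precisely the source of the gap. There $[\,(\eps\delta^{-1})_g\,]=[\eps_g]-[\delta_g]=0$, so $(\eps\delta^{-1})_g\in\Vee{q}r$ always commutes with $\eta_g$ and every pair is compatible for $\SK{a}b$; whereas $[\,(\eps\delta)_g\,]=2ab[[\alpha]]/s^{\ord_t(s)}$, which is nonzero whenever $[[\alpha]]\neq0$ because $r$ is chosen coprime to $2ab$ (a hypothesis your argument never uses), and then $(\eps\delta)_g$ fails to commute with any nontrivial $\eta_g\in\Vee{q}r$ on a cycle where $\alpha_g=1$. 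Your ``genuinely mixed'' case (one factor cyclic, one non-cyclic) is in fact the case where the compatibility conditions for the two knots visibly coincide, since if $\delta=\alpha^{ab}=\eta^{abn}$ then $\delta$ commutes with $\eta$ and both conditions reduce to $[\eta,\eps]=1$.

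The second gap is that statement (II)---existence of a pair compatible for $\SK{a}b$ but not $\GK{a}b$---is only flagged as ``the main obstacle,'' not addressed. The paper must exhibit an explicit homomorphism $\rho:H_{a,b}\to\Wqrst$ whose meridian image $\alpha$ (in reduced standard form in $\Arst$) satisfies both $[[\alpha]]\neq0$ \emph{and} $\alpha_f=1$ at the unique fixed point $f$ of $\hat\alpha$ acting on $\Cst$, so that there exist roots $\eta$ with $\eta_f\neq1$; this requires a careful hands-on construction (choosing $\chi_g=\xi^b$ constant and perturbing $\psi_g$ near $f$ so that $x^a=y^b$ is still respected while $\beta_f=\xi^{ad-bc-1}=1$). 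Without that construction, and with the case analysis corrected as above, your outline establishes at most the non-strict inequality.
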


The underlying strategy is identical to that used in~\cite{tuffley-2007} to distinguish the generalised knot groups of the square and granny knots, $SK=\SK23$ and $GK=\GK23$. Both groups $G_n(\GK{a}{b})$, $G_n(\SK{a}{b})$ are obtained from
\[
H_{a,b}=\pi_1(\GK{a}{b})=\pi_1(\SK{a}{b})=
\langle x,y,z,w | x^a=y^b, w^a = z^b,y^{d}x^{-c} = z^{d}w^{-c} \rangle
\]
by adjoining an $n$th root of the common meridian $\mu=y^{d}x^{-c} = z^{d}w^{-c}$ that commutes with the corresponding longitude 
$\lambda_{GK_{a,b}}=x^aw^a$ or $\lambda_{SK_{a,b}}=x^aw^{-a}$. Thus, to compare the number of homomorphisms from 
$G_n(\GK{a}{b})$ and  $G_n(\SK{a}{b})$ to a given finite group $H$ we consider pairs of the form $(\rho,\eta)$, where $\rho:H_{a,b}\to H$ is a homomorphism and 
$\eta\in H$ is an $n$th root of $\rho(\mu)$. We say that such a pair $(\rho,\eta)$ is a \emph{map-root pair} for $H_{a,b}$ in $H$. For $K=\GK{a}b$ or $\SK{a}b$ a map-root pair for $H_{a,b}$ in $H$ defines a homomorphism
$\tilde\rho:G_n(K)\to H$ precisely when it satisfies the \emph{compatibility condition} 
\[
\rho(\lambda) \eta = \eta \rho(\lambda). 
\]
For $\GK{a}b$ this may be written in the form
\[
\rho(x^a)\rho(w^a)\eta = \eta\rho(x^{a})\rho(w^a) \qquad\text{or}\qquad
\rho(x^{-a})\eta\rho(x^{a}) = \rho(w^a)\eta\rho(w^{-a}),
\]
while for $\SK{a}b$ it may be written as
\[
\rho(x^a)\rho(w^{-a})\eta = \eta\rho(x^{a})\rho(w^{-a}) \qquad\text{or}\qquad
\rho(x^{-a})\eta\rho(x^{a}) = \rho(w^{-a})\eta\rho(w^{a}).
\]
Thus, it suffices to prove the following:
\begin{enumerate}
\renewcommand{\theenumi}{\Roman{enumi}}
\item 
\label{item:inequality}
Every map-root pair for $H_{a,b}$ in \Wqrst\ that is compatible for $\GK{a}{b}$ is also compatible for $\SK{a}{b}$.
\item
\label{item:strict}
There exist map-root pairs for $H_{a,b}$ in \Wqrst\ that are compatible for $\SK{a}{b}$ but not $\GK{a}{b}$.
\end{enumerate}

Since $H_{a,b} = G_{a,b}*_{\langle\mu\rangle}G_{a,b}$, a homomorphism $\rho:H_{a,b}\to H$ consists of a pair of homomorphisms $\rho_1,\rho_2:G_{a,b}\to H$ such that $\rho_1(\mu)=\rho_2(\mu)$. We therefore begin by determining the possible images of the meridian and longitude of $G_{a,b}$ under a homomorphism $G_{a,b}\to\Wqrst$. The results and proofs of this section parallel those of~\cite[Sec.\ 4]{tuffley-2007}.

\subsection{The image and roots of the meridian}
\label{sec:meridian}

We characterise up to conjugacy solutions to the equation $\eta^n=\alpha$ in \Wqrst, with $\hat\alpha\neq1$. 

\begin{lemma}
\label{lem:meridian}
Let $n\geq2$ be a positive integer, and let $q,r,s,t$ be distinct primes such that $q|n$ and $\gcd(rst,n)=1$. If $\alpha\in\Wqrst$ is an $n$th power such that $\hat{\alpha} \neq 1$, then $\alpha$ is a conjugate to an element of $\Arst$ in reduced standard form. 
Conversely, if $\alpha\in\Arst$ is in reduced standard form and $\hat\alpha\neq1$ then the solutions to $\eta^n=\alpha$ in \Wqrst\ are described by
\begin{enumerate}
\item\label{item:shape}
$\hat{\eta}=\hat{\alpha}^k$,
where $\hat{\alpha}^k$ is the unique $n$th root of $\hat{\alpha}$ in $\langle \hat{\alpha} \rangle$;
\item\label{item:eta-is-standard}
$\eta_{g\cdot\hat\alpha}=\eta_g$ for all $g\in\Cst$, and so also 
$\eta_{g\cdot\hat\eta}=\eta_g$ for all $g\in\Cst$;
\item\label{item:alphagnontrivial}
$\eta_g=\alpha_g^{1/n}\in\langle\xi\rangle$
if $\alpha_g \neq 1$, where $1/n$ is the multiplicative inverse of $n$ in $\mathbb{Z}_r$; and
\item\label{item:alphagtrivial}
$\eta_g\in \Vee{q}r$ if $\alpha_g = 1$.
\end{enumerate}
Consequently, the solutions to $\eta^n=\alpha$ in \Wqrst\ are parameterised by $(\Vee{q}{r})^c$, where $c$ is the number of cycles of $\hat{\alpha}$ on which $\alpha_g=1$.
\end{lemma}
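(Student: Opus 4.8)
### Proof proposal for Lemma~\ref{lem:meridian}

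The plan is to prove the two directions separately, drawing on the cycle-product machinery (Lemmas~\ref{lem:rsf}, \ref{lem:mthpower}) and on Corollary~\ref{cor:inArst}. For the forward direction, suppose $\alpha=\eta^n$ with $\hat\alpha\neq1$. By Lemma~\ref{lem:mthpower}, $\pi_g(\alpha)=(\pi_g(\eta))^{n/\gcd(\ell_g(\hat\eta),n)}$; since $\ell_g(\hat\eta)\in\{1,s,t\}$ and $\gcd(st,n)=1$, the exponent is just $n$, so $\pi_g(\alpha)=(\pi_g(\eta))^n$. Now $\pi_g(\eta)\in\dihedral{q}r$ has order $1$, $q$, or $r$ by Lemma~\ref{lem:dihedralprops}; raising to the $n$th power kills the order-$q$ case because $q\mid n$, and leaves order $1$ or $r$ otherwise (as $\gcd(r,n)=1$). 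Hence $\pi_g(\alpha)$ has order $1$ or $r$ for every $g\in\Cst$, and Corollary~\ref{cor:inArst} applies to give that $\alpha$ is conjugate to an element of $\Arst$ in reduced standard form.

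For the converse, assume $\alpha\in\Arst$ is in reduced standard form with $\hat\alpha\neq1$, and let $\eta$ be any solution of $\eta^n=\alpha$. I would extract the four conditions in order. First, $\widehat{\eta^n}=\hat\eta^{\,n}=\hat\alpha$ by the wreath-product structure, and since $\hat\eta\in\dihedral{s}t$ has order $1$, $s$, or $t$ with $\gcd(st,n)=1$, the equation $\hat\eta^{\,n}=\hat\alpha$ inside the cyclic group generated by $\hat\alpha$ has the unique solution $\hat\eta=\hat\alpha^k$ described in Remark~\ref{rem:nthroot}; this is item~\eqref{item:shape}, and it forces $\langle\hat\eta\rangle=\langle\hat\alpha\rangle$, so the two elements have the same cycles on $\Cst$. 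Second, since $\hat\eta$ and $\hat\alpha$ generate the same subgroup, $\alpha_g$ is constant on orbits of $\hat\eta$; applying Lemma~\ref{lem:rsf-centraliser} with the roles "$\alpha$ in reduced standard form, $\eta$ commutes with $\alpha$" (note $\eta$ commutes with $\alpha=\eta^n$) gives that $\eta_g$ commutes with $\alpha_g$ and is constant on orbits of $\hat\alpha$, which is item~\eqref{item:eta-is-standard}. In particular $\eta$ is in reduced standard form, so $\pi_g(\eta)=\eta_g^{\ell_g(\hat\eta)}$ and, arguing as before, $\mathrm{ord}(\eta_g)=\mathrm{ord}(\pi_g(\eta))$.

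It remains to pin down $\eta_g$ itself. Since $\eta$ is in reduced standard form with $\hat\eta=\hat\alpha^k$, the cycle product of $\eta^n=\alpha$ at $g$ is $\alpha_g=\pi_g(\alpha)=(\pi_g(\eta))^n=\eta_g^{\,\ell_g(\hat\eta)n}$, while $\alpha\in\Arst$ in reduced standard form means $\alpha_g=\xi^{m_g}$ for some $m_g$ with $\xi$ of order $r$; also $\alpha_g=\pi_g(\alpha)^{?}$... more directly, because $\alpha$ is already in reduced standard form in $\Arst$ we have $\alpha_g\in\langle\xi\rangle$ and $\eta_g$ commutes with it. If $\alpha_g\neq1$, then $\alpha_g$ has order $r$, hence $\eta_g$ has order $r$ too (it can't have order $q$, as $q\mid n$ would force $\alpha_g=1$, and it can't be trivial), and commuting order-$r$ elements of $\dihedral{q}r$ lie in a common cyclic subgroup of order $r$ (Lemma~\ref{lem:dihedralprops}\eqref{item.dihedralprops.centraliser}); combined with $\eta_g^{\,\text{(power)}}=\alpha_g$ this gives $\eta_g=\alpha_g^{1/n}\in\langle\xi\rangle$ via the unique-$n$th-root statement of Remark~\ref{rem:nthroot}, which is item~\eqref{item:alphagnontrivial}. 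If $\alpha_g=1$, then $\eta_g$ must have order $1$ or $q$ (order $r$ is excluded since $\eta_g^{\ell_g(\hat\eta)n}=1$ with $\gcd(rn,\,\cdot\,)$ forcing triviality), so $\eta_g\in\Vee{q}r$, which is item~\eqref{item:alphagtrivial}. Conversely, any $\eta$ satisfying~\eqref{item:shape}--\eqref{item:alphagtrivial} is readily checked to satisfy $\eta^n=\alpha$ via the cycle-product formula, since on a cycle where $\alpha_g=1$ the product $\eta_g^{\,\ell_g n}$ is trivial (order of $\eta_g$ divides $q$, coprime to $\ell_g n$) and on a cycle where $\alpha_g\neq1$ it returns $\alpha_g$ by construction. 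Finally, the parameterisation count follows: on each cycle where $\alpha_g\neq1$ the value $\eta_g$ is forced, and on each of the $c$ cycles where $\alpha_g=1$ we may choose $\eta_g\in\Vee{q}r$ freely (constant on the cycle), giving $|\Vee{q}r|^c$ solutions.

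The main obstacle I anticipate is bookkeeping in the converse: carefully justifying that $\eta$ is automatically in reduced standard form (so that the clean identity $\pi_g(\eta)=\eta_g^{\ell_g(\hat\eta)}$ is available) rather than merely conjugate to such, and then correctly threading the order constraints — using $q\mid n$, $\gcd(rst,n)=1$, and the primality of $q,r,s,t$ — to eliminate the unwanted cases for $\mathrm{ord}(\eta_g)$ in each of the two regimes $\alpha_g=1$ and $\alpha_g\neq1$. The verification that the listed conditions are also sufficient requires one more application of the cycle-product formula of Lemma~\ref{lem:mthpower}, which should be routine once the coprimality $\gcd(st,n)=1$ is invoked to collapse the exponent to $n$.
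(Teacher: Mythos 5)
Your overall route is the same as the paper's: the forward direction via Lemma~\ref{lem:mthpower} and Corollary~\ref{cor:inArst}, and the converse by first pinning down $\hat\eta$ with Remark~\ref{rem:nthroot}, then showing $\eta$ is in reduced standard form with Lemma~\ref{lem:rsf-centraliser}, and finally analysing $\eta_g$ entrywise; the parameterisation count is also identical.

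The one step that does not close as written is the derivation of $\eta_g=\alpha_g^{1/n}$. You assert $\alpha_g=\pi_g(\alpha)=(\pi_g(\eta))^n=\eta_g^{\ell_g(\hat\eta)n}$, but for $\alpha$ in reduced standard form the cycle product is $\pi_g(\alpha)=\alpha_g^{\ell_g(\hat\alpha)}$, not $\alpha_g$; taken literally your chain would yield $\eta_g=\alpha_g^{1/(\ell_g(\hat\eta)n)}$, which is the wrong exponent whenever $\ell_g(\hat\eta)\not\equiv1\bmod r$. You then fall back on ``$\eta_g^{\text{(power)}}=\alpha_g$'' with the power left unspecified, which is precisely the fact that needs establishing. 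The repair is one line from what you have already proved: once $\eta_g$ is known to be constant on orbits of $\hat\eta$, the wreath-product multiplication gives the \emph{entrywise} identity $(\eta^n)_g=\eta_g\,\eta_{g\cdot\hat\eta}\cdots\eta_{g\cdot\hat\eta^{n-1}}=\eta_g^{\,n}$ (this is the full $n$-fold product, which wraps around the cycle, not the cycle product), so $\alpha_g=\eta_g^{\,n}$ and Remark~\ref{rem:nthroot} applied in $\dihedral{q}r$ gives both formulas for $\eta_g$. This entrywise identity is what the paper uses, and it is also what you should use for the sufficiency check at the end: matching cycle products only determines an element up to conjugacy on each cycle, whereas $(\eta^n)_g=\eta_g^{\,n}=\alpha_g$ verifies the equation outright.
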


\begin{proof}
Suppose $\eta^n=\alpha$. Then by Lemma~\ref{lem:mthpower} we have 
\[
\pi_g(\alpha) = (\pi_g(\eta))^{n}
\]
for all $g\in\Cst$. 
Now $q$ divides $n$, so $\pi_g(\alpha)\in\dihedral{q}r$ is a $q$th power in $\dihedral{q}r$ and therefore $\pi_g(\alpha)$ has order 1 or $r$ for all $g$.
Therefore by Corollary~\ref{cor:inArst} $\alpha$ is conjugate to an element of $\Arst$ in reduced standard form, as claimed.

Now let $\alpha$ be an element of $\Arst$  in reduced standard form such that $\hat\alpha\neq 1$, and suppose that $\eta\in\Wqrst$ satisfies $\eta^n=\alpha$. Then we must have $\hat\eta^n=\hat\alpha$ in \dihedral{s}t, 
and since $\gcd(st,n)=1$, by Remark~\ref{rem:nthroot} $\hat\eta=\hat\alpha^k$, 
where $\hat{\alpha}^k$ is the unique $n$th root of $\hat{\alpha}$ in $\langle \hat{\alpha} \rangle$. This establishes part~\eqref{item:shape}. 

The order of $\hat{\alpha}$ is prime, so the orbits of $\hat{\eta}=\hat{\alpha}^k$ and the orbits of $\hat{\alpha}$ coincide. Therefore $\alpha_g$ is constant on orbits of $\hat{\eta}$, because $\alpha$ is in reduced standard form. Since $\alpha=\eta^n$ commutes with $\eta$, Lemma~\ref{lem:rsf-centraliser} implies that $\alpha_g$ commutes with $\eta_g$ for all $g\in\Cst$, and that $\eta_g$ is constant on the orbits of $\hat\alpha$. Thus $\eta_{g\cdot \hat{\alpha}}=\eta_g$ for all $g\in\Cst$, and then also $\eta_{g \cdot \hat{\eta}}=\eta_g$ for all $g$, because $\hat\eta=\hat\alpha^k$. This proves part~\eqref{item:eta-is-standard}. 

We have now shown that $\eta$ is in reduced standard form. We therefore have
\[
\alpha_g = (\eta^n)_g = (\eta_g)^n
\]
in $\dihedral{q}r$. By hypothesis $n$ is divisible by $q$ but not by $r$.
Therefore, by Remark~\ref{rem:nthroot}
either $\alpha_g$ is of order $r$ and $\eta_g$ is the unique $n$th root of $\alpha_g$ in $\langle\alpha_g\rangle=\langle\xi\rangle$, or $\alpha_g=1$ and then we can let $\eta_h=\eta_g=v$ for all $h$ in the orbit of $g$ under $\hat\eta$, for any $v\in\Vee{q}r$. This proves parts~\eqref{item:alphagnontrivial} and~\eqref{item:alphagtrivial}. 
These values (with $v$ chosen independently on each orbit where $\alpha_g=1$) together with $\hat\eta=\hat\alpha^k$ do in fact define $n$th roots, so the $n$th roots of  $\alpha$  are parameterised by $(\Vee{q}r)^c$, where $c$ is the number of cycles of $\hat{\alpha}$ on which $\alpha_g=1$.
\end{proof}

\subsection{The image of the longitude}
\label{sec:longitude}

We now determine the form of the image of the longitude $x^a=y^b$, under the assumption that the meridian maps to an element $\alpha$ of the form considered in Lemma~\ref{lem:meridian}. 

\begin{lemma}
\label{lem:longitude}
Let $a,b,n\geq2$ be positive integers such that $\gcd(a,b)=1$, and let $q,r,s,t$ be distinct primes satisfying the conditions of Theorem~\ref{thm:main-refined}. 
Let $\alpha$ be an element of $\Arst$ in reduced standard form such that $\hat {\alpha}$ is nontrivial.  
Suppose that $\rho: G_{a,b} \to\Wqrst$ is such that $\rho(\mu)=\alpha$ and let
\begin{align*}
\rho(x)&=\chi, & \rho(y)&=\psi, & \rho(x^a)&=\rho(y^b)=\eps. 
\end{align*}
If $\langle\hat\chi,\hat\psi\rangle$ is a cyclic subgroup of \dihedral{s}t\ then  $\langle\chi,\psi\rangle$ is itself a cyclic subgroup of \Wqrst, and $\eps = \alpha^{ab}$. Otherwise, if $\langle\hat\chi,\hat\psi\rangle$ 
is a non-cyclic subgroup of \dihedral{s}t\ then
\begin{enumerate} 
\item\label{item:eps.shape}
$\hat\eps = 1$;
\item\label{item:eps.constant.on.orbits}
$\eps_g$ is constant on orbits of $\hat\alpha$;
\item\label{item:eps.conjugacy}
the conjugacy class of $\eps_g$ is constant on $\Cst$;
\item\label{item:eps.modr}
$[\eps_g]= \frac {ab}{s^{\ord_t(s)}} [[\alpha]]\in\integer_r$ for all $g\in\Cst$;
\item\label{item:eps.value}
$\eps_g = \xi^{ab[[\alpha]]/s^{\ord_t(s)}}$ if $\alpha_g \neq 1$.
\end{enumerate}
\end{lemma}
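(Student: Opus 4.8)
The plan is to read off the coarse structure of $\rho$ from its image in $\dihedral{s}t$, and then to pin down the components of $\eps$ using the cycle product together with the hypothesis that $\alpha$ is in reduced standard form. Write $B=(\dihedral{q}r)^{\Cst}$ for the base subgroup of $\Wqrst$, so that $\Wqrst/B\cong\dihedral{s}t$. Composing $\rho$ with this quotient gives a homomorphism $\bar\rho\colon G_{a,b}\to\dihedral{s}t$ with $\bar\rho(x)=\hat\chi$, $\bar\rho(y)=\hat\psi$, $\bar\rho(\mu)=\hat\alpha\ne1$, and $\hat\chi^a=\hat\psi^b=\hat\eps$, so $(\hat\chi,\hat\psi)$ solves $x^a=y^b$ in $\dihedral{s}t$. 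By Lemma~\ref{lem:non-cyclicsolutions} and Remark~\ref{rem:non-cyclicgenerates}, either this solution is cyclic, or $\hat\chi^a=\hat\psi^b=1$ and $\langle\hat\chi,\hat\psi\rangle=\dihedral{s}t$. Since $x^a=y^b$ generates $Z(G_{a,b})$ (Theorem~\ref{thm:Gab}), $\eps=\rho(x^a)$ is central in $\rho(G_{a,b})$. I claim $\hat\eps=1$ in both alternatives: this is immediate in the non-cyclic one, while in the cyclic one $\langle\hat\chi,\hat\psi\rangle$ is cyclic of prime order $\ell\in\{s,t\}$ generated by $\hat\alpha$, and comparing with $H_1(G_{a,b})\cong\integer=\langle[\mu]\rangle$ (in which $[x]=b[\mu]$, $[y]=a[\mu]$) forces $\hat\chi=\hat\alpha^b$, $\hat\psi=\hat\alpha^a$, so $\hat\eps=\hat\alpha^{ab}=1$ because $\ell\mid ab$. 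Hence $\eps\in B$ throughout.

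For the non-cyclic case I would establish the five parts in turn. Part~\eqref{item:eps.shape} is the statement $\hat\eps=1$. For~\eqref{item:eps.constant.on.orbits}: as $\eps$ commutes with $\alpha$, $\hat\eps=1\in\langle\hat\alpha\rangle$, and $\alpha$ is in reduced standard form, Lemma~\ref{lem:rsf-centraliser} gives that $\eps_g$ commutes with $\alpha_g$ and that $\eps_g$ is constant on orbits of $\hat\alpha$. For~\eqref{item:eps.conjugacy}: centrality of $\eps$ together with $\hat\eps=1$ yields $\eps_{g\cdot\hat\beta}=\beta_g^{-1}\eps_g\beta_g$ for every $\beta\in\rho(G_{a,b})$, and since $\rho(G_{a,b})$ surjects onto $\dihedral{s}t$, which is transitive on the conjugacy class $\Cst$, all the $\eps_g$ are conjugate in $\dihedral{q}r$; in particular the $[\eps_g]$ all equal a common $e\in\integer_r$. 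For~\eqref{item:eps.modr}: then $[[\eps]]=\sum_{g\in\Cst}[\eps_g]=|\Cst|\,e=s^{\ord_t(s)}e$, while $[[\cdot]]\circ\rho\colon G_{a,b}\to\integer_r$ is a homomorphism to an abelian group, hence factors through $H_1(G_{a,b})=\langle[\mu]\rangle$, so $[[\eps]]=[[\rho(x^a)]]=ab\,[[\alpha]]$; as $s\ne r$ are primes, $s^{\ord_t(s)}$ is a unit mod $r$ and $e=ab\,[[\alpha]]/s^{\ord_t(s)}$. For~\eqref{item:eps.value}: if $\alpha_g\ne1$ then $\alpha_g\in\langle\xi\rangle$ has order $r$, so Lemma~\ref{lem:dihedralprops}\eqref{item.dihedralprops.centraliser} forces $\eps_g\in\langle\alpha_g\rangle=\langle\xi\rangle$, whence $\eps_g=\xi^{[\eps_g]}=\xi^{ab[[\alpha]]/s^{\ord_t(s)}}$.

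For the cyclic case, the previous step gives $\hat\chi=\hat\alpha^b$, $\hat\psi=\hat\alpha^a$, $\hat\eps=1$, and $\langle\hat\chi,\hat\psi\rangle=\langle\hat\alpha\rangle$ of prime order $\ell\in\{s,t\}$, so $\rho(G_{a,b})\subseteq B\rtimes\langle\hat\alpha\rangle$. It suffices to prove $\langle\chi,\psi\rangle$ is abelian: a cyclic quotient of $G_{a,b}$ is a quotient of $H_1(G_{a,b})=\langle[\mu]\rangle$, hence equals $\langle\alpha\rangle$, and then $\eps=\rho(x^a)=\alpha^{ab}$ since $[x^a]=ab[\mu]$. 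Now $B\rtimes\langle\hat\alpha\rangle$ decomposes according to the $\hat\alpha$-orbits on $\Cst$ (one fixed point and orbits of length $\ell$, by Remark~\ref{rem:cyclestructure}), and $\langle\chi,\psi\rangle$ is abelian if and only if each of its block projections is. On a fixed orbit the projection gives a solution to $x^a=y^b$ in $\dihedral{q}r$; since $r\nmid a$ and $r\nmid b$, Lemma~\ref{lem:non-cyclicsolutions} admits no non-cyclic solution there, so the projected subgroup is cyclic. On a length-$\ell$ orbit the projection lands in $\dihedral{q}r\wr\integer_\ell$; applying Lemma~\ref{lem:mthpower} to $\chi^a=\eps=\psi^b$ shows that the components of $\chi$ and the cycle products of $\psi$ solve $x^a=y^{b/\ell}$ (or $x^{a/\ell}=y^b$) in $\dihedral{q}r$, again necessarily cyclic because $r\nmid ab$; combining this with the relation $\mu=y^{d}x^{-c}$ and the reduced standard form of $\rho(\mu)$ forces the block components of $\chi$ and $\psi$ into the cyclic block component of $\langle\alpha\rangle$. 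Hence $\langle\chi,\psi\rangle$ is abelian, completing this case.

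I expect the non-cyclic case to be routine once $\hat\eps=1$ is in hand. The hard part will be the length-$\ell$-orbit step of the cyclic case: per-block cyclicity only says that various pairs lie in \emph{some} cyclic subgroup of $\dihedral{q}r$, and the real content is using the reduced standard form of $\rho(\mu)$ and the precise word $\mu=y^dx^{-c}$ to see that those cyclic subgroups align, so that $\chi$ and $\psi$ genuinely land in $\langle\alpha\rangle$ rather than merely in subgroups that together generate something non-abelian. Carefully tracking cycle products around a length-$\ell$ orbit is where the bulk of the bookkeeping lies.
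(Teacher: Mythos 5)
Your non-cyclic case is correct and essentially the paper's argument: parts (1), (2), (4), (5) match, and for part (3) your direct computation $\eps_{g\cdot\hat\beta}=\beta_g^{-1}\eps_g\beta_g$ (valid since $\hat\eps=1$ and $\eps$ is central in the image) combined with transitivity of $\Dst=\langle\hat\chi,\hat\psi\rangle$ on $\Cst$ is a harmless variant of the paper's appeal to Lemma~\ref{lem:mthpower}.

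The cyclic case, however, has a genuine gap, and you have flagged it yourself: your plan reduces everything to showing that the block projections of $\langle\chi,\psi\rangle$ onto the $\hat\alpha$-orbits are abelian, and you concede that the length-$\ell$-orbit step --- aligning the various cyclic subgroups produced by per-block applications of Lemma~\ref{lem:non-cyclicsolutions} --- is not carried out. That alignment is the entire content of the case, so as written the cyclic half is unproved. The paper sidesteps all of this with an observation your own computation already puts in your hands but which you do not exploit: you correctly derive $\hat\chi=\hat\alpha^b$ and $\hat\psi=\hat\alpha^a$ with $\ord(\hat\alpha)\in\{s,t\}$, and since $s\mid a$ and $t\mid b$ this forces not only $\hat\eps=1$ but also that \emph{one of} $\hat\chi$, $\hat\psi$ is trivial. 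Say $\hat\chi=1$. Then $\chi$ lies in the base group, each $\chi_g$ has order $1$, $q$ or $r$, and these are coprime to $a$; choosing $k$ with $ak\equiv1\bmod qr$ gives $\chi=(\chi^a)^k=\eps^k$. Since $\eps=\rho(x^a)$ is central in the image, $\chi$ is too, so $\langle\chi,\psi\rangle$ is abelian, $\rho$ factors through the abelianisation $\integer=\langle[\mu]\rangle$, and $\chi=\alpha^b$, $\psi=\alpha^a$, $\eps=\alpha^{ab}$ follow at once. No orbit-by-orbit bookkeeping or cycle-product tracking is needed. I recommend replacing your block-decomposition plan with this argument; if you insist on completing your route, you would still need to use the word $\mu=y^dx^{-c}$ to pin the blocks to $\langle\alpha\rangle$, which is precisely the step you have not supplied.
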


\begin{proof}
We consider the cases where $\langle\hat\chi,\hat\psi\rangle$ is a cyclic and non-cyclic subgroup of \dihedral{s}t\ in turn. 

\subsubsection{The cyclic case} 

If $\langle\hat\chi,\hat\psi\rangle$ is a cyclic subgroup of \Dst, then the homomorphism $\hat\rho:G_{a,b}\to\Dst$ given by $\hat\rho(\omega)=\widehat{\rho(\omega)}$ factors through the abelianisation $G_{a,b}\to\integer$. The abelianisation is generated by the image of the meridian, so
\begin{align*}
\hat\chi&=\hat\alpha^b, & \hat\psi&=\hat\alpha^a, & \hat\eps&=\hat\alpha^{ab}. 
\end{align*}
Now every nontrivial element of \Dst\ has order $s$ or $t$, and $s|a$ and $t|b$, so $\hat\eps=1$ and either $\hat\chi=1$ or $\hat\psi=1$. 

The construction of $\Dst$ is not symmetric in $s$ and $t$, so we cannot simply assume without loss of generality that $\hat\chi=1\in\Dst$. Nevertheless, the arguments in both cases are entirely analogous so we just present one of them. 
Suppose then that $\hat\chi=1$ (that is, suppose that $\hat\alpha$ has order $t$). Then the entries of $\eps=\chi^a$ are simply given by
\[
\eps_g = \chi_g^a
\]
for each $g\in\Cst$. The primes $q$ and $r$ are coprime to $a$, so there are integers $c$ and $d$ such that $ac\equiv1\bmod q$, and $ad\equiv 1\bmod r$, and then by the Chinese Remainder Theorem there is an integer $k$ such that $k\equiv c\bmod q$ and $k\equiv d\bmod r$. Then $ak\equiv1\bmod qr$.  Noting that $\chi_g$ has order 1, $q$ or $r$ we then get
\[
\eps_g^k = (\chi_g^a)^k = \chi_g^{ak} = \chi_g,
\]
and since $\hat\eps=\hat\chi=1$ we get $\chi=\eps^k$. 

Recall that $x^a=y^b$ generates the centre of $G_{a,b}$. Thus, $\eps$ commutes with $\psi$, and so $\chi=\eps^k$ commutes with $\psi$ also. Therefore $\rho(G_{a,b})=\langle\chi,\psi\rangle$ is abelian, so $\rho$ itself factors through the abelianisation $G_{a,b}\to\integer$. Since the abelianisation is generated by the meridian we have
\begin{align*}
\chi&=\alpha^b, & \psi&=\alpha^a, & \eps&=\alpha^{ab}. 
\end{align*}

\subsubsection{The non-cyclic case} 

Suppose that $\langle\hat\chi,\hat\psi\rangle$ is a non-cyclic subgroup of \Dst. Then since $\hat\chi^a=\hat\psi^b=\hat\eps$ and $\langle\hat\chi,\hat\psi\rangle$ is a non-cyclic subgroup of \Dst, 
by Lemma~\ref{lem:non-cyclicsolutions} it must be the case that $\hat\chi$ has order $s$, $\hat\psi$ has order $t$, and $\hat\eps=1\in\Dst$.
Then $\alpha_g$ is constant on the orbits of $\hat\eps$ (since each orbit is a singleton), and moreover $\alpha$ commutes with $\eps$ (since $x^a=y^b$ generates the centre of $G_{a,b})$. We may therefore apply Lemma~\ref{lem:rsf-centraliser} to conclude that $\eps_g$ commutes with $\alpha_g$ for all $g\in\Cst$, and $\eps_g$ is constant on orbits of $\hat\alpha$.
This proves parts~\eqref{item:eps.shape} and~\eqref{item:eps.constant.on.orbits}. 

To prove part~\eqref{item:eps.conjugacy} we apply Lemma~\ref{lem:mthpower} to $\eps=\chi^a=\psi^b$. Each orbit of $\hat\eps$ acting on $\Cst$ is a singleton, so $\eps_g=\pi_g(\eps)$ for all $g$ and it follows that the conjugacy class of $\eps_g$ is constant on the orbits of both $\hat\chi$ and $\hat\psi$. But $\langle\hat\chi,\hat\psi\rangle=\Dst$ by Lemma~\ref{lem:dihedral.generators} and \Dst\ acts transitively on \Cst, so the conjugacy class of $\eps_g$ must be constant on \Cst. 

Since the conjugacy class of $\eps_g$ is constant on \Cst, so is the value of $[\eps_g]\in\integer_r$. To prove part~\eqref{item:eps.modr} we evaluate this common value using the abelianisation $[[\cdot]]:\Wqrst\to\integer_r$. On one hand we have
\[
[[\eps]]= \sum_{h\in\Cst} [\eps_h]= |\Cst|\cdot[\eps_g]=s^{\ord_t(s)}[\eps_g], 
\]
and on the other we have
\[
[[\eps]]=[[\chi^a]]=a[[\chi]]=ab[[\alpha]],
\]
because $\alpha$ is the image of the meridian, which generates the abelianisation of $G_{a,b}$. Therefore 
$s^{\ord_t(s)}[\eps_g]=ab[[\alpha]]$ in $\integer_r$, and since $s$ is coprime to $r$ we may divide by $s^{\ord_t(s)}$ to get $[\eps_g]=ab[[\alpha]]/s^{\ord_t(s)}$ in $\integer_r$. 

Finally, to prove part~\eqref{item:eps.value}, recall that $\alpha_g\in\langle\xi\rangle$ for all $g$, and we have shown above that $\eps_g$ commutes with $\alpha_g$ for all $g$. By Lemma~\ref{lem:dihedralprops} nontrivial elements of $\langle\xi\rangle$ commute only with other elements of $\langle\xi\rangle$, so if $\alpha_g\neq1$ then $\eps_g\in\langle\xi\rangle$ also. Since $[\xi]=1$ we must have 
$\eps_g = \xi^{ab[[\alpha]]/s^{\ord_t(s)}}$ if $\alpha_g \neq 1$. 
\end{proof}

\section{Proof of the Main Theorem}
\label{sec:proof}

We now have all the ingredients we require to prove  our main result by proving Theorem~\ref{thm:main-refined}. Recall that we will do this by proving the following two statements:
\begin{enumerate}
\renewcommand{\theenumi}{\Roman{enumi}}
\item
Every map-root pair for $H_{a,b}$ in \Wqrst\ that is compatible for $\GK{a}{b}$ is also compatible for $\SK{a}{b}$.
\item
There exist map-root pairs for $H_{a,b}$ in \Wqrst\ that are compatible for $\SK{a}{b}$ but not $\GK{a}{b}$.
\end{enumerate}
The compatibility conditions are
\begin{align*}
\GK{a}b:& &
\rho(x^a)\rho(w^a)\eta &= \eta\rho(x^{a})\rho(w^a) & \text{or}&&
\rho(x^{-a})\eta\rho(x^{a}) &= \rho(w^a)\eta\rho(w^{-a}), \\
\SK{a}b:&&
\rho(x^a)\rho(w^{-a})\eta &= \eta\rho(x^{a})\rho(w^{-a}) & \text{or}&&
\rho(x^{-a})\eta\rho(x^{a}) &= \rho(w^{-a})\eta\rho(w^{a}).
\end{align*}
To begin, let $(\rho,\eta)$ be a map-root pair for $H_{a,b}$ in \Wqrst, and let $\hat\rho:H_{a,b}\to\Dst$ be the induced map defined by 
$\hat\rho(\omega)=\widehat{\rho(\omega)}$ for all $\omega\in H_{a,b}$. Our initial case division is according to whether or not $\hat\rho$ is trivial. Since the conjugacy class of $\mu$ generates $H_{a,b}$, the map $\hat\rho$ is trivial if and only if $\hat\rho(\mu)=1\in\Dst$. 

\subsection{Trivial induced maps to \Dst}

Suppose that $\hat\rho$ is trivial. Then we may regard the homomorphism $\rho$ as a homomorphism $H_{a,b}\to(\Dqr)^\Cst$, and as such it is a product of homomorphisms $\rho_g:H_{a,b}\to\Dqr$. Since $\hat\eta^n=\hat\rho(\mu)=1\in\Dst$, and $\gcd(st,n)=1$, we must have $\hat\eta=1\in\Dst$ also. It follows that $(\rho,\eta)$ decomposes as a collection of map-root pairs $\{(\rho_g,\eta_g):g\in\Cst\}$ for $H_{a,b}$ in \Dqr.

By hypothesis $ab$ is coprime to $qr$, so by Lemma~\ref{lem:non-cyclicsolutions} any homomorphism $G_{a,b}\to\Dqr$ has cyclic image. Consequently $\rho_g(H_{a,b})$ is cyclic too, generated by $\alpha_g=\rho_g(\mu)$. Then
\[
\rho_g(x^a)=\rho_g(w^a)=\alpha_g^{ab}=\eta_g^{abn},
\]
so both $\rho_g(x^a)$ and $\rho_g(w^a)$ commute with $\eta_g$. It follows that $(\rho_g,\eta_g)$ is compatible for both $\GK{a}b$ and $\SK{a}b$ for all $g\in\Cst$, and hence that $(\rho,\eta)$ is compatible for both $\GK{a}b$ and $\SK{a}b$. 

\subsection{Nontrivial induced maps to \Dst}

Now suppose that $\hat\rho$ is nontrivial. Then $\rho(\mu)=\eta^n$ is an $n$th power in \Wqrst\ such that $\widehat{\rho(\mu)}\neq1$, so by Lemma~\ref{lem:meridian} it is conjugate to an element $\alpha$ of $\Arst$ in reduced standard form. Let $\beta\in\Wqrst$ be such that $\beta\rho(\mu)\beta^{-1}=\alpha$, set $\eta'=\beta\eta\beta^{-1}$, and define $\rho':H_{a,b}\to\Wqrst$ by $\rho'(g)=\beta\rho(g)\beta^{-1}$.  Then 
$(\rho',\eta')$ is a map-root pair for $H_{a,b}$ in \Wqrst\ such that $\rho'(\mu)=\alpha$, and $(\rho,\eta)$ is compatible for $\GK{a}b$ or $\SK{a}b$ if and only if 
$(\rho',\eta')$ is. So it suffices to prove statement~\eqref{item:inequality} under the assumption that $\rho(\mu)=\alpha$ is an element of \Arst\ in reduced standard form. 

Let $\eps = \rho(x^a)$, $\delta = \rho(w^a)$. Then $\eps$ and $\delta$ are each described by Lemma~\ref{lem:longitude}, and the compatibility conditions may be written as
\begin{align*}
\GK{a}b:&&\eta\eps\delta &=\eps\delta\eta &\text{or}&&
\eps^{-1}\eta\eps &= \delta\eta\delta^{-1}, \\
\SK{a}b:&&  \eta\eps\delta^{-1} &=\eps\delta^{-1}\eta &\text{or}&&
\eps^{-1}\eta\eps &= \delta^{-1}\eta\delta.
\end{align*}
We consider two cases:

\subsubsection{At least one of $\eps$ or $\delta$ is equal to $\alpha^{ab}$}

Without loss of generality suppose that $\delta=\alpha^{ab}=\eta^{abn}$. Then $\delta$ commutes with $\eta$, so for both knots the compatibility condition is that $\eps$ commutes with $\eta$ also. 
Thus $(\rho,\eta)$ is compatible for $\GK{a}b$ if and only it is compatible for $\SK{a}b$.

\subsubsection{Neither $\eps$ nor $\delta$ is equal to $\alpha^{ab}$}

If $\eps\neq\alpha^{ab}\neq\delta$ then $\eps$ and $\delta$ are both described by statements~\eqref{item:eps.shape}--\eqref{item:eps.value} of Lemma~\ref{lem:longitude}, and $\eta$ is described by Lemma~\ref{lem:meridian}. We will make use of the following lemma, with $\beta$ equal to $\eta$, and $\gamma$ equal to $\eps\delta$ and $\eps\delta^{-1}$ in turn:
\begin{lemma}
\label{lem:centralising}
Suppose that $\beta,\gamma\in\Wqrst$ are such that $\hat\beta,\hat\gamma\in\Dst$ commute, and $\beta_{g\cdot\hat\gamma}=\beta_g$, $\gamma_{g\cdot\hat\beta}=\gamma_g$ for all $g\in\Cst$. Then $\beta\gamma=\gamma\beta$ if and only if $\beta_g\gamma_g=\gamma_g\beta_g$ for all $g\in\Cst$. 
\end{lemma}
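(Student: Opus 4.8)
The plan is to unwind the wreath-product multiplication formula for the product $\beta\gamma$ and compare it with the formula for $\gamma\beta$, coordinate by coordinate and in the $\Dst$ factor. Recall that for $\alpha\in\Wqrst$ the group operation is $(\alpha\beta)_g=\alpha_g\beta_{g\cdot\hat\alpha}$ and $\widehat{\alpha\beta}=\hat\alpha\hat\beta$. So first I would observe that $\widehat{\beta\gamma}=\hat\beta\hat\gamma=\hat\gamma\hat\beta=\widehat{\gamma\beta}$, since $\hat\beta$ and $\hat\gamma$ commute by hypothesis; thus the $\Dst$-components of $\beta\gamma$ and $\gamma\beta$ automatically agree, and $\beta\gamma=\gamma\beta$ holds if and only if $(\beta\gamma)_g=(\gamma\beta)_g$ for all $g\in\Cst$.

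Next I would write out the two coordinate products: $(\beta\gamma)_g=\beta_g\,\gamma_{g\cdot\hat\beta}$ and $(\gamma\beta)_g=\gamma_g\,\beta_{g\cdot\hat\gamma}$. Here the two hypotheses on the coordinate functions are exactly what is needed to simplify: since $\gamma_{h\cdot\hat\beta}=\gamma_h$ for all $h$ (apply it with $h=g$), we get $\gamma_{g\cdot\hat\beta}=\gamma_g$, so $(\beta\gamma)_g=\beta_g\gamma_g$; and since $\beta_{h\cdot\hat\gamma}=\beta_h$ for all $h$, we get $\beta_{g\cdot\hat\gamma}=\beta_g$, so $(\gamma\beta)_g=\gamma_g\beta_g$. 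Therefore $(\beta\gamma)_g=(\gamma\beta)_g$ for all $g\in\Cst$ if and only if $\beta_g\gamma_g=\gamma_g\beta_g$ for all $g\in\Cst$, which combined with the automatic agreement of the $\Dst$-components gives the claimed equivalence.

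This is essentially a bookkeeping argument with no real obstacle; the only thing to be slightly careful about is the direction in which the semidirect-product action is applied (left action of $\Dst$ on $(\Dqr)^{\Cst}$ versus the right action of $\Dst$ on $\Cst$ by conjugation), so that the subscripts $g\cdot\hat\beta$ and $g\cdot\hat\gamma$ come out on the correct factor. Once the multiplication formula is transcribed correctly the two hypotheses plug in immediately. I would write the proof as a short two- or three-line display chain, perhaps with a parenthetical note recording which hypothesis is used at which step.

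\begin{proof}
Since $\hat\beta$ and $\hat\gamma$ commute, $\widehat{\beta\gamma}=\hat\beta\hat\gamma=\hat\gamma\hat\beta=\widehat{\gamma\beta}$, so the $\Dst$-components of $\beta\gamma$ and $\gamma\beta$ agree. Hence $\beta\gamma=\gamma\beta$ if and only if $(\beta\gamma)_g=(\gamma\beta)_g$ for all $g\in\Cst$. Now for each $g\in\Cst$,
\begin{align*}
(\beta\gamma)_g &= \beta_g\,\gamma_{g\cdot\hat\beta} = \beta_g\gamma_g, \\
(\gamma\beta)_g &= \gamma_g\,\beta_{g\cdot\hat\gamma} = \gamma_g\beta_g,
\end{align*}
using $\gamma_{g\cdot\hat\beta}=\gamma_g$ in the first line and $\beta_{g\cdot\hat\gamma}=\beta_g$ in the second. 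Therefore $(\beta\gamma)_g=(\gamma\beta)_g$ for all $g$ if and only if $\beta_g\gamma_g=\gamma_g\beta_g$ for all $g$, and the lemma follows.
\end{proof}
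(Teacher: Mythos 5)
Your proof is correct and follows exactly the same route as the paper's: compare the $\Dst$-components using the commutativity hypothesis, then use the two constancy hypotheses to reduce $(\beta\gamma)_g$ and $(\gamma\beta)_g$ to $\beta_g\gamma_g$ and $\gamma_g\beta_g$ respectively. Nothing to add.
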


In particular, the hypotheses of Lemma~\ref{lem:centralising} are satisfied if $\hat\gamma=1$ and $\gamma_{g\cdot\hat\beta}=\gamma_g$ for all $g\in\Cst$. We will use the lemma in this special case. 

\begin{proof}
Since $\hat\beta$ commutes with $\hat\gamma$ we have $\widehat{\beta\gamma}=\hat\beta\hat\gamma=\hat\gamma\hat\beta=\widehat{\gamma\beta}$. Therefore $\beta\gamma=\gamma\beta$ if and only if $(\beta\gamma)_g=(\gamma\beta)_g$ for all $g\in\Cst$. On the one hand 
since $\gamma_{g\cdot\hat\beta}=\gamma_g$ for all $g$ we have
\[
(\beta\gamma)_g = \beta_g\gamma_{g\cdot\hat\beta} = \beta_g\gamma_g.
\]
On the other, since $\beta_{g\cdot\hat\gamma}=\beta_g$ for all $g$ we have
\[
(\gamma\beta)_g = \gamma_g\beta_{g\cdot\hat\gamma}=\gamma_g\beta_g.
\]
The result follows. 
\end{proof}

Since $\hat\eps=\hat\delta=1\in\Dst$ we have $\widehat{\eps\delta}=\widehat{\eps\delta^{-1}}=1\in\Dst$ also, and
\begin{align*}
(\eps\delta)_g&= \eps_g\delta_{g\cdot\hat\eps} = \eps_g\delta_g, \\
(\eps\delta^{-1})_g &= \eps_g(\delta^{-1})_{g\cdot\hat\eps}
= \eps_g(\delta^{-1})_g= \eps_g(\delta_{g\cdot\hat\delta^{-1}})^{-1}
= \eps_g(\delta_g)^{-1}.
\end{align*}
Then by Lemma~\ref{lem:longitude} part~\eqref{item:eps.constant.on.orbits} we have
\begin{align*}
(\eps\delta)_{g\cdot\hat\eta} 
    &= \eps_{g\cdot\hat\eta}\delta_{g\cdot\hat\eta}
     = \eps_{g\cdot\hat\alpha^k}\delta_{g\cdot\hat\alpha^k}
     = \eps_g\delta_g = (\eps\delta)_g,\\
(\eps\delta^{-1})_{g\cdot\hat\eta} 
    &= \eps_{g\cdot\hat\eta}(\delta_{g\cdot\hat\eta})^{-1}
     = \eps_{g\cdot\hat\alpha^k}(\delta_{g\cdot\hat\alpha^k})^{-1}
     = \eps_g(\delta_g)^{-1} = (\eps\delta^{-1})_g,
\end{align*}
so Lemma~\ref{lem:centralising} applies with $\beta=\eta$ and $\gamma$ equal to either $\eps\delta$ or $\eps\delta^{-1}$.

We now check when $\eta_g$ commutes with each of $(\eps\delta)_g$ and $(\eps\delta^{-1})_g$. We consider two cases, according to whether or not $\alpha_g=1$:
\begin{enumerate}
\item
If $\alpha_g\neq 1$ then $\eta_g$, $\eps_g$ and $\delta_g$ all belong to $\langle\xi\rangle$, and so $\eta_g$ commutes with both $(\eps\delta)_g$ and $(\eps\delta^{-1})_g$. 
(In fact we have $\eps_g=\delta_g=\xi^{ab[[\alpha]]/s^{\ord_t(s)}}$, so
\begin{align*}
(\eps\delta)_g &=\xi^{2ab[[\alpha]]/s^{\ord_t(s)}}, &
(\eps\delta^{-1})_g &= 1.)
\end{align*}

\item
If $\alpha_g=1$ then $\eta_g\in\Vee{q}r$. 
Applying $[\cdot]:\Dqr\to\integer_r$ to each of $(\eta\delta)_g$ and $(\eps\delta^{-1})$ we get
\begin{align*}
[(\eps\delta)_g] &= [\eps_g]+[\delta_g]=\frac{2ab[[\alpha]]}{s^{\ord_t(s)}}, \\
[(\eps\delta^{-1})_g] &=[\eps_g]-[\delta_g] = 0,
\end{align*}
and since $r$ is chosen coprime to $2ab$ it follows that $(\eps\delta^{-1})_g\in\Vee{q}r$ for all $g$, but 
$(\eps\delta)_g\in\Vee{q}r$ if and only if $[[\alpha]]=0\in\integer_r$. It follows that $(\eps\delta^{-1})_g$ always commutes with $\eta_g$, but $(\eps\delta)_g$ only commutes with $\eta_g$ if $\eta_g=1$, or $[[\alpha]]=0$. 
\end{enumerate}

We conclude that when $\eps\neq\alpha^{ab}\neq\delta$, every map-root pair is compatible for $\SK{a}b$, but $(\rho,\eta)$ is compatible for $\GK{a}b$ only if $[[\alpha]]=0$, or if $\eta_g=1$ whenever $\alpha_g=1$. 
In the next section, we will complete the proof of the theorem by showing that there exists $\rho$ realising $[[\alpha]]\neq0$, together with $\alpha_g=1$ for some $g\in\Cst$.

\subsection{Realisation}

Choose $\hat\chi,\hat\psi\in\Dst$ arbitrarily such that $\hat\chi$ has order $s$ and $\hat\psi$ has order $t$. Then since $s|a$ and $t|b$ there is a homomorphism $\hat\rho:G_{a,b}\to\Dst$ such that $\hat\rho(x)=\hat\chi$ and $\hat\rho(y)=\hat\psi$.

By Theorem~\ref{thm:Gab} the meridian of $T_{a,b}$ in $G_{a,b}$ is given by $\mu=y^dx^{-c}$, where $ad-bc=1$. Both $c$ and $d$ may be chosen to be positive, and we observe that $\gcd(c,s)=\gcd(d,t)=1$, which implies that $\hat\chi^c\neq1\neq\hat\psi^d$. Therefore $\hat\chi^c$ has order $s$, and $\hat\psi^d$ has order $t$. Let $\hat\beta=\hat\rho(\mu)=\hat\psi^d\hat\chi^{-c}$, and note that $\hat\beta$ must have order $t$, because $\hat\chi^c$ belongs to $\Vee{s}t$ but $\hat\psi^d$ does not. 

Since $\hat\beta$ has order $t$, by Remark~\ref{rem:cyclestructure} its action on $\Cst$ has a unique fixed point $f$ (namely, whichever power of $\hat\beta$ lies in $\Cst$). The fixed point $f$ cannot be the fixed point of $\hat{\psi}$, because then $f$ would be fixed by $\hat{\chi}^c=\hat\beta^{-1}\hat{\psi}^d$, which acts freely.  
Extend $\hat\chi,\hat\psi$ to elements of $\Wqrst$ by defining $\chi_g = \xi^b$ for all $g\in\Cst$, and 
\[
\psi_g = 
\begin{cases}
\xi^{a+1},      &\text{if $g = f\cdot\hat\psi^{-1}$;}\\
\xi^{a-1},       &\text{if $g = f$;}\\
\xi^a,             &\text{otherwise}.
\end{cases}
\]
The cycle of $\hat\psi$ acting on $f$ is illustrated in Figure~\ref{fig:cycle} in the case $t=7$. 
Then it is seen (using Figure~\ref{fig:cycle} for the cycle of $\hat{\psi}$ containing $f$) that $\pi_g(\psi)=\xi^{at}$ for all $g$ 
not the fixed point of $\hat{\psi}$. It then follows that
\begin{align*}
(\chi^a)_g=(\psi^ b)_g= \xi^{ab}
\end{align*}
for all $g\in\Cst$. Since also $\hat{\chi}^a=\hat{\psi}^b=1$, we have $\chi^a=\psi^b$, and therefore $\rho(x)=\chi, \rho(y)=\psi$ defines a homomorphism $\rho:G_{a,b} \to \Wqrst$. We may then obtain a homomorphism $\rho:H_{a,b} \to \Wqrst$ by setting $\rho(w)=\chi, \rho(z)=\psi$ also.

We now calculate $\chi^{-c}$ and $\psi^d$ in preparation for computing $\beta=\rho(\mu)=\psi^{d}\chi^{-c}$. Since $\chi_g=\xi^b$ for all $g$ we have simply $(\chi^{-c})_g = \xi^{-bc}$ for all $g$; and referring again to Figure~\ref{fig:cycle}, we have $(\psi^d)_g=\xi^{ad}$ except at those points where the product
\[
(\psi^d)_g = \prod_{k=0}^{d-1}\psi_{g\cdot\hat\psi^k}
\]
begins at $f$ or ends at $f\cdot\hat\psi^{-1}$. The latter point is where $g\cdot\hat\psi^{d-1}=f\cdot\hat\psi^{-1}$, so $g=f\cdot\hat\psi^{-d}$ and we have
\[
(\psi^{d})_g= 
\begin{cases}
\xi^{ad-1},      &\text{if $g = f$;}\\
\xi^{ad+1},       &\text{if $g = f\cdot\hat{\psi}^{-d}$;}\\
\xi^{ad},             &\text{otherwise}.
\end{cases}
\]
Then
\[
\beta_g = (\psi^{d}\chi^{-c})_g 
= (\psi^d)_{g}(\chi^{-c})_{g\cdot\hat\psi^d}
=  (\psi^{d})_{g}\xi^{-bc}, 
\]
so
\[
\beta_g= 
\begin{cases}
\xi^{ad-bc-1}=\xi^0=1,      &\text{if $g=f$;}\\
\xi^{ad-bc+1}=\xi^2,        &\text{if $g=f\cdot\hat\psi^{-d}$;}\\
\xi^{ad-bc}=\xi,                        &\text{otherwise}.
\end{cases}
\]
Note that $[[\beta]]=s^{\ord_t(s)}\not\equiv0\bmod r$, because $\gcd(r,s)=1$. By Lemmas~\ref{lem:rsf} and~\ref{lem:inArst} there exists $\omega\in\Wqrst$ such that $\alpha=\omega\beta\omega^{-1}$ is an element of $\Arst$ in reduced standard form, with $\hat\alpha=\hat\beta$ and
\[
\alpha_g= 
\begin{cases}
1,                          &\text{if $g=f$;}\\
\xi^{(t+1)/t},        &\text{if $g\in(f\cdot\hat\psi^{-d})\cdot\langle\hat\alpha\rangle$;}\\
\xi,            &\text{otherwise}.
\end{cases}
\]
We still have $[[\alpha]]\neq0$, so the homomorphism $\rho':H_{a,b}\to\Wqrst$ defined by $\rho'(g)=\omega\rho(g)\omega^{-1}$ realises the case where there are $n$th roots of $\rho'(\mu)=\alpha$ that are compatible for $\SK{a}b$ but not $\GK{a}b$, namely, pairs $(\rho',\eta)$ where $\eta_f\neq1$. This establishes statement~\eqref{item:strict}, and completes the proof of Theorem~\ref{thm:main-refined}.

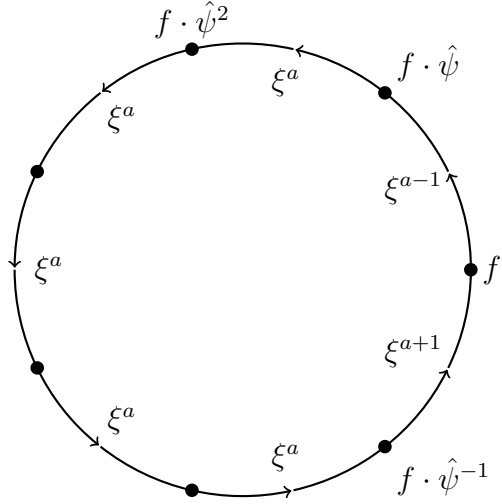
\begin{figure}
\begin{center}
\begin{tikzpicture}[vertex/.style={circle,draw,fill=black,minimum size = 1.5mm,inner sep=0pt},thick]
\setlength{\radius}{3cm}
\foreach \x in {0,1,...,6}
\node (\x) at (\x*360/7:\radius) [vertex] {};
\foreach \x in {0,1,...,6}
\node (mid\x) at (\x*360/7-180/7:\radius) {};
\foreach \x in {0,1,...,6} 
\draw[->] (mid\x) arc ((\x*360/7-180/7:(\x+0.49)*360/7:\radius);
\foreach \x/\y/\p in {6/{$f\cdot\hat\psi^{-1}$}/{below right},0/{$f$}/right,1/{$f\cdot\hat\psi$}/{above right},2/{$f\cdot\hat\psi^2$}/{above}}
   \node [\p] at (\x) {\y};
\foreach \x in {1,2,...,5}
   \node at (\x*360/7+180/7:0.85*\radius) {$\xi^a$};
\node at (180/7:0.83*\radius) {$\xi^{a-1}$};
\node at (-180/7:0.83*\radius) {$\xi^{a+1}$};
\end{tikzpicture}
\caption{Diagram of the cycle of $\hat\psi$ containing $f$ in the case where $t=7$.} 
\label{fig:cycle}
\end{center}
\end{figure}

\end{document}